\newtheorem{theorem}{\bf Theorem}[section]
\newtheorem{proposition}[theorem]{\bf Proposition}
\newtheorem{lemma}[theorem]{\bf Lemma}
\newtheorem{corollary}[theorem]{\bf Corollary}
\newtheorem{remark}[theorem]{\bf Remark}
\newenvironment{proofof}[1]{\noindent{\it Proof of
#1.}}{\hfill$\square$\\\mbox{}}
\def\field{K}
\def\NN{\mathbb{N}}
\def\ZZ{\mathbb{Z}}
\def\QQ{\mathbb{Q}}
\def\RR{\mathbb{R}}
\def\CC{\mathbb{C}}
\def\FF{\mathbb{F}}
\def\sepbeta{\beta_{\mathrm{sep}}}
\title{Separating polynomial invariants over non-closed fields of finite abelian groups}
\author[M. Domokos]{M\'aty\'as Domokos}
\address{HUN-REN Alfr\'ed R\'enyi Institute of Mathematics,
Re\'altanoda utca 13-15, 1053 Budapest, Hungary,
ORCID iD: https://orcid.org/0000-0002-0189-8831}
\email{domokos.matyas@renyi.hu}
\subjclass[2020]{Primary 13A50; Secondary 13P15, 20C15, 94A12}
\keywords{polynomial invariants, separating sets, degree bounds, finite abelian groups, multireference alignment, regular representation}
\begin{document}
\maketitle

\begin{abstract}
It is proved that for any finite dimensional representation of a prime order group over the field of rational numbers, 
polynomial invariants of degree at most $3$ separate the orbits. 
A result providing an upper degree bound for separating invariants for representations of finite abelian groups  over algebraically closed base fields of non-modular 
characteristic  is generalized for the case of base  
fields that are not algebraically closed (like the fields of real or rational numbers). 
\end{abstract}

\section{Introduction} 

This work is largely motivated by  \cite{bandeire-blumsmith-kileel-niles-weed-perry-wein}, 
where the authors develop an approach to multireference alignment using invariant theory. 
Multireference alignment is a basic problem in applied mathematics, which arose in connection with structural biology, image recognition, and signal processing 
(see e.g. the introduction of \cite{bandeira-rigollet-weed}). 
It was shown in \cite{perry_etal}  that the sample complexity of multireference alignment is related to the orbit recovery problem for the cyclic group $\mathrm{C}_n$ 
acting by cyclic shifts on the space $\RR^n$.  
In particular, in  \cite[Theorem 4.1]{bandeire-blumsmith-kileel-niles-weed-perry-wein} a Galois theoretic proof is given for the fact that for the regular representation  
over $\RR$ of a finite abelian group,  
the field of rational invariants is generated by polynomial invariants of degree at most $3$ 
(and hence generic orbits can be separated by polynomial invariants of degree at most $3$).  This is in fact a special case of a theorem on the regular representation of an arbitrary 
finite group (see \cite[Remark 4.3]{bandeire-blumsmith-kileel-niles-weed-perry-wein},  
\cite{edidin-katz}; a generalization is given in \cite{blumsmith-derksen}),  
but here first we shall focus on the special case of cyclic groups of prime order $p$. 
It is proved in \cite[Proposition 3.13]{blumsmith-garcia-hidalgo-rodriguez} 
that for a faithful finite dimensional real representation of $\mathrm{C}_p$, 
polynomial invariants of degree smaller than $p$ are not sufficient to separate the orbits. 
In sharp contrast with that result,  our Theorem~\ref{thm:main} says that different orbits in $\QQ^p$ under the regular representation of $\mathrm{C}_p$ can always be separated by invariants of degree at most $3$. As a consequence, the same conclusion holds for any finite dimensional representation of $\mathrm{C}_p$ over the field $\QQ$ of rational numbers. As far as we know, this is the first example in the literature of a finite group whose separating Noether number over an infinite field $\FF$ 
is not equal to its separating Noether number over the algebraic closure 
of $\FF$ (see Section~\ref{sec:prel} for the definition of the separating Noether number). 
Also our Theorem~\ref{thm:main} gives the first example of a field $\FF$ and an infinite set of pairwise non-isomorphic groups 
such that the separating Noether numbers 
over $\FF$ of the groups in the given set stay below a uniform upper bound. 
Note that a similar scenario does not occur for the ordinary Noether number (defined in terms of generating invariants, as opposed to separating invariants), 
because \cite[Theorem 2.1]{cziszter-domokos:lower bound} (and \cite{richman} in the modular case) imply that for any field $\FF$, any infinite set of pairwise non-isomorphic finite groups contains an element 
with arbitrarily large Noether number. 
The study of separating invariants over the field of rational numbers may have 
interest from the point of view of applications, because the numerical data in applications are rational (have finite decimal representation), so the orbit recovery problem over $\QQ$ has relevance. 

We provide a theoretical explanation for the phenomenon revealed by Theorem~\ref{thm:main} by placing it into a more general context. 
In fact a second motivation for this paper is that while degree bounds for separating invariants of finite abelian groups over the 
complex field are relatively well understood (see \cite{domokos:abelian}, \cite{schefler_c_n^r}, \cite{schefler_rank2}, \cite{schefler-zhao-zhong}), 
only sporadic information is available about the exact degree bounds for real representations of these groups. It is desirable to fill in this gap in the 
literature, because several applications require to work over the real field (see for example the problems mentioned in the 
introduction of \cite{cahill-iverson-mixon}, or see \cite{cahill-iverson-mixon-packer}).   
Recall that every representation of a finite abelian group over an algebraically closed base field of non-modular characteristic 
decomposes as a direct sum of $1$-dimensional representations, and therefore the corresponding algebra of polynomial invariants 
is generated by monomials. Separating sets of monomials were characterized in \cite{domokos:abelian} (see \cite{domokos:BLMS} and 
\cite{dufresne-jeffries} for similar results on not necessarily finite diagonalizable group actions). 
Here in Theorem~\ref{thm:separating monomials} we work out the generalization of this result appropriate for representations of finite abelian groups over non-algebraically closed base fields, where we can not diagonalize a representation. Theorem~\ref{thm:separating monomials}  clarifies the reason why the degree bound for separating invariants 
is smaller in general, than the corresponding degree bound when we replace the base field by its algebraic closure. 

We begin in Section~\ref{sec:prel} by recalling some basic concepts and notation of invariant theory. 
In Section~\ref{sec:block monoid} we deduce from an observation on the block monoid Proposition~\ref{prop:deg 3 separating on torus}, 
which is a variant of 
\cite[Theorem 4.1]{bandeire-blumsmith-kileel-niles-weed-perry-wein} about generically 
separating invariants on the complex regular representation of a finite abelian group. 
This is then used to prove Theorem~\ref{thm:main} on the regular representation of $\mathrm{C}_p$ over the field $\QQ$ of rational numbers. 
We conclude Section~\ref{sec:rational} by Proposition~\ref{prop:C4} and Proposition~\ref{prop:S3}, showing that 
Theorem~\ref{thm:main} does not generalize for cyclic groups of composite order, or for non-abelian groups. 
In Section~\ref{sec:non-closed} we first recall in Proposition~\ref{prop:irrep abelian} the classification of irreducible representations of a finite abelian group $G$ 
over a not necessarily algebraically closed base field of non-modular characteristic. We state and prove Theorem~\ref{thm:separating monomials} giving a sufficient 
condition for a set of monomials (defined over the algebraic closure of the base field $\FF$) to separate the orbits in the regular representation of $G$ over $\FF$, 
and deduce in Corollary~\ref{cor:sepnoether} an improved  bound on the separating Noether number of $G$ over $\FF$.  
This improvement is illustrated by a concrete example in Proposition~\ref{prop:C3xC3} and Section~\ref{sec:an example}. 

\section{Preliminaries in invariant theory} \label{sec:prel} 

Let us recall some standard notation of invariant theory. 
Throughout this paper $G$ will be a finite group, and $\FF$ a field whose characteristic does not divide the order of $G$. 
Let $V$ be a finite dimensional vector space over $\FF$, endowed with a representation of $G$ on $V$; that is, we are given a 
homomorphism $\rho$ from $G$ into the group $\mathrm{GL}(V)$ of invertible linear transformations of $V$. 
Sometimes we shall refer to a representation as the pair $(V,\rho)$, or we say that $V$ is an $\FF G$-module. 
Write $\FF[V]$ for the coordinate ring of $V$. This is a polynomial algebra in $\dim_\FF(V)$ commuting variables 
endowed with the 
standard grading (where the variables have degree $1$). 
 The variables are thought of as the coordinate functions on $V$ with respect to a chosen basis, 
and the elements of $\FF[V]$ represent $\FF$-valued polynomial functions on $V$ (when $\FF$ is infinite, 
$\FF[V]$ can be identified with the 
algebra of $\FF$-valued  polynomial functions on $V$). 
We write $\deg(f)$ for the degree 
of a non-zero element $f$ of $\FF[V]$. 
The action of $G$ on $V$ induces an action of $G$ on $\FF[V]$ 
via graded $\FF$-algebra automorphisms 
(for $f\in \FF]V]$, $g\in G$, $v\in V$ we have $(g\cdot f)(v)=f(g^{-1}v)$),  
and we denote by $\FF[V]^G$ the subalgebra of polynomial 
$G$-invariants on $V$ (the polynomials fixed by each group element). 
This is a graded subalgebra of $\FF[V]$.  
For $v\in V$ we write $G\cdot v\subset V$ for the $G$-orbit of $v$. 
It is immediate from the definition that $G\cdot v=G\cdot w$ implies $f(v)=f(w)$ for all 
$f\in \FF[V]^G$. The converse also holds (see \cite[Theorem 3.12.1]{derksen-kemper}, 
\cite[Theorem 16]{kemper} or \cite[Lemma 3.1]{draisma-kemper-wehlau}): 
\begin{equation}\label{eq:separability by invariants} 
G\cdot v\neq G\cdot w\iff \exists f\in\FF[V]^G\colon f(v)\neq f(w).
\end{equation} 
The algebra $\FF[V]^G$ is generated by invariants of degree at most $|G|$ 
(see \cite{noether}, \cite{fleischmann}, \cite{fogarty}). 
We shall denote by $\sepbeta(G,V)$ the minimal positive integer $d$ such that 
for any $v,w\in V$ with $G\cdot v\neq G\cdot w$, there exists an $f\in\FF[V]^G$ of degree 
at most $d$ with $f(v)\neq f(w)$. 
Furthermore, we write $\sepbeta^\FF(G)$ for the maximum of $\sepbeta(G,V)$, where 
$V$ ranges over the finite dimensional $\FF G$-modules.  We call $\sepbeta^\FF(G)$ the 
\emph{separating Noether number of $G$ over $\FF$}; 
its study was initiated in  \cite{kemper}, \cite{kohls-kraft}.   
By the facts mentioned above, we have 
\begin{equation}\label{eq:sepbeta<beta} 
\sepbeta^\FF(G)\le \beta^\FF(G)\le |G|,
\end{equation} 
where $\beta^\FF(G)$ is the maximum of $\beta(G,V)$, as  
$V$ ranges over the finite dimensional $\FF G$-modules, and 
$\beta(G,V)$ is the maximal degree of an element in a minimal homogeneous generating system 
of $\FF[V]^G$. The study of the \emph{Noether number} 
$\beta^\FF(G)$ began in \cite{schmid}, and much of the presently known results on its exact value 
are summarized in \cite{cziszter-domokos-szollosi}. 

\section{An observation on the block monoid of a finite abelian group}\label{sec:block monoid} 

Let $G$ be a finite abelian group, written multiplicatively. 
Denote by $\mathcal{F}(G,\NN_0)$ the set of non-negative integer valued functions on $G$. This is a monoid, with pointwise addition of functions. 
It is a submonoid of the abelian group $\mathcal{F}(G,\ZZ)$ of integer valued functions on $G$, with pointwise addition of functions. 
Clearly $\mathcal{F}(G,\ZZ)$ is a free abelian group of rank $|G|$. 
For an element $s\in \mathcal{F}(G,\ZZ)$ we call 
\[|s|:=\sum_{g\in G}|s(g)|\]
the \emph{length of $s$}.  
The element $s$ of $\mathcal{F}(G,\NN_0)$ can be thought of as a sequence over $G$ of length $|s|$, 
where the order of the elements in the sequence is disregarded, and $s(g)$ is 
the number of occurrences of $g$ in the sequence $s$. 
The submonoid 
\[\mathcal{B}(G):=\{s\in \mathcal{F}(G,\NN_0)\mid \prod_{g\in G}g^{s(g)}=1_G\}\subseteq \mathcal{F}(G,\NN_0)\] 
is called the \emph{monoid of product-one sequences over $G$} (it is also called 
the \emph{block monoid of $G$}, influenced by the terminology of \cite{narkiewicz}). 
The abelian group $\mathcal{F}(G,\ZZ)$ is freely generated by $\delta_g$, $g\in G$, where 
\begin{equation}\label{eq:delta} \delta_g(x)=\begin{cases} 1 \text{ if }x=g;\\
0 \text{ if }x\neq g.\end{cases}\end{equation} 

\begin{lemma}\label{lemma:length 3 generators} 
The block monoid $\mathcal{B}(G)$ is contained in the $\ZZ$-submodule of $\mathcal{F}(G,\ZZ)$ 
generated by 
\begin{equation}\label{eq:S} 
S:=\{\delta_{1_G},\quad \delta_g+\delta_{g^{-1}}, \quad \delta_g+\delta_h+\delta_{(gh)^{-1}}\mid g,h\in G\setminus \{1_G\}\}.\end{equation} 
\end{lemma}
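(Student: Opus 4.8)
The plan is to pass to the quotient abelian group $Q:=\mathcal{F}(G,\ZZ)/L$, where $L$ denotes the $\ZZ$-submodule of $\mathcal{F}(G,\ZZ)$ generated by $S$, and to write $\overline{x}$ for the image in $Q$ of an element $x\in\mathcal{F}(G,\ZZ)$. Since $\mathcal{F}(G,\ZZ)$ is free on the $\delta_g$, there is a well-defined map $\psi\colon G\to Q$ given by $\psi(g)=\overline{\delta_g}$. The heart of the argument is the claim that $\psi$ is a group homomorphism from the multiplicative group $G$ to the additive group $Q$; equivalently, that
\[
\delta_{gh}-\delta_g-\delta_h\in L\qquad\text{for all }g,h\in G.
\]

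Granting this claim, the conclusion is immediate. For an arbitrary $s\in\mathcal{B}(G)$ I would compute in $Q$, using that $\psi$ is a homomorphism and that $G$ is abelian,
\[
\overline{s}=\sum_{g\in G}s(g)\,\overline{\delta_g}=\sum_{g\in G}s(g)\,\psi(g)=\psi\Bigl(\prod_{g\in G}g^{s(g)}\Bigr)=\psi(1_G)=0,
\]
where the penultimate equality invokes the defining condition $\prod_{g\in G}g^{s(g)}=1_G$ of the block monoid. Hence $s\in L$, which is exactly the assertion of the lemma.

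It remains to verify the additivity relation, and this is the step where the three families of generators in $S$ interact, so it is the main point to get right. I would split into cases according to which of $g$, $h$, $gh$ equals $1_G$. If $g=1_G$ (or symmetrically $h=1_G$), then $\delta_{gh}-\delta_g-\delta_h=-\delta_{1_G}\in L$. If $g,h\neq 1_G$ but $gh=1_G$, then $\delta_{gh}-\delta_g-\delta_h=\delta_{1_G}-(\delta_g+\delta_{g^{-1}})$, a difference of two generators in $S$. Finally, if $g$, $h$ and $gh$ are all distinct from $1_G$, then both $\delta_g+\delta_h+\delta_{(gh)^{-1}}$ and $\delta_{gh}+\delta_{(gh)^{-1}}$ lie in $S$ (the latter being of the type $\delta_x+\delta_{x^{-1}}$ with $x=gh\neq 1_G$), and subtracting the former from the latter telescopes to $\delta_{gh}-\delta_g-\delta_h$. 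In each case the relation holds, so $\psi$ is a homomorphism and the proof is complete. The argument is short precisely because the reduction to checking a single functional equation on $\psi$ replaces any length-based induction; the only genuine care needed is in the degenerate cases where elements among $g,h,gh$ coincide with $1_G$ or with one another, which is why the case split is the crux.
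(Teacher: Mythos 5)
Your proof is correct, and it takes a genuinely different route from the paper. The paper argues by induction on the length $|s|$: it introduces the subgroup $A:=\{s\in\mathcal{F}(G,\ZZ)\mid \prod_{g}g^{s(g)}=1_G\}\supseteq\mathcal{B}(G)$ and repeatedly subtracts elements of $S$ (first clearing the coordinate at $1_G$, then handling $|s|=2$, then peeling off $\delta_g+\delta_h+\delta_{(gh)^{-1}}$ when $|s|\ge 3$) to reduce the length. You instead pass to the quotient $Q=\mathcal{F}(G,\ZZ)/\langle S\rangle_\ZZ$ and verify the single identity $\delta_{gh}-\delta_g-\delta_h\in\langle S\rangle_\ZZ$, which makes $g\mapsto\overline{\delta_g}$ a homomorphism from $G$ to $Q$; the lemma then falls out formally from the defining relation of $\mathcal{B}(G)$. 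Your case analysis is complete, and the degenerate coincidences ($g=h$, $(gh)^{-1}\in\{g,h\}$, or $gh=(gh)^{-1}$) are harmless since $S$ is defined by pairs $g,h\in G\setminus\{1_G\}$ with no distinctness requirement, so the relevant elements still lie in $S$. What each approach buys: yours is shorter and more conceptual, it replaces the induction by one functional equation, and it transparently proves the stronger containment $A\subseteq\langle S\rangle_\ZZ$ (negative exponents are handled automatically by the homomorphism property), which is in fact what the paper's proof also establishes and what its application in Proposition~\ref{prop:deg 3 separating on torus} uses (invariant monomials are written as quotients $t_1\cdots t_k u_1^{-1}\cdots u_m^{-1}$); the paper's induction, on the other hand, is more constructive in spirit, exhibiting an explicit length-reducing rewriting procedure for a given $s$.
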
 

\begin{proof}
Denote by $A$ the $\ZZ$-submodule in $\mathcal{F}(G,\ZZ)$ given as 
$A:=\{s\in \mathcal{F}(G,\ZZ)\mid \prod_{g\in G}g^{s(g)}=1_G\}$. 
Clearly $A$ contains $\mathcal{B}(G)$, therefore it is sufficient to show that 
$A$ is contained in $\langle S\rangle_\ZZ$, the $\ZZ$-submodule of $\mathcal{F}(G,\ZZ)$ generated by $S$. Take any $s\in A$. We prove by induction on $|s|$ that  $s$ belongs to 
$\langle S\rangle_\ZZ$. If $|s|=0$, then $s$ is the zero element of $\mathcal{F}(G,\ZZ)$, so it belongs to all $\ZZ$-submodules of $\mathcal{F}(G,\ZZ)$. Assume next that $|s|>0$, and all $s'\in A$ 
with $|s'|<|s|$ belong to $\langle S\rangle_\ZZ$. 
If $|s|=1$ then necessarily $s=\delta_{1_G}$ or 
$-\delta_{1_G}$, so $s$ belongs to $\langle \delta_{1_G}\rangle_\ZZ\le \langle S\rangle_\ZZ$. Moreover, if $s(1_G)\neq 0$, then replacing $s$ by $-s$ if necessary, 
we may assume that $s(1_G)>0$, hence for $s':=s-\delta_{1_G}\in A$ we have $|s'|=|s|-1$. By the induction 
hypothesis $s'\in \langle S\rangle_\ZZ$, implying in turn that 
$s=s'+\delta_{1_G}\in \langle S\rangle_\ZZ$. 
From now on we assume 
that $s(1_G)=0$. If $|s|=2$, then $s\in A$ implies $s=\pm(\delta_g+\delta_{g^{-1}})$, hence 
$s\in \langle S\rangle_\ZZ$ by definition of $S$. 
Assume finally that $|s|\ge 3$. Then $\sum_{s(g)>0}s(g)\ge 2$ or 
$\sum_{s(g)<0}s(g)\le -2$. Replacing $s$ by $-s$ if necessary we may assume 
that the first option holds, so there exist elements $g,h\in G\setminus \{1_G\}$ such that 
for $s':=s-(\delta_g+\delta_h)$ we have $|s'|=|s|-2$. 
Set $s'':=s-(\delta_g+\delta_h+\delta_{(gh)^{-1}})$. Then $s''\in A$ and $|s''|\le |s'|+1=|s|-1$, 
hence $s''\in \langle S\rangle_\ZZ$ by the induction hypothesis, implying in turn that 
$s=s''+(\delta_g+\delta_h+\delta_{(gh)^{-1}})\in \langle S\rangle_\ZZ$. 
\end{proof} 

Let $\field$ be  an extension of $\FF$ containing an element of multiplicative order $\mathrm{exp}(G)$, 
where $\mathrm{exp}(G)$ is the exponent of $G$ (the least common multiple of the orders of the elements of $G$). 
For example, we may take for $\field$ the algebraic closure of $\FF$. 
Denote by $\widehat G$ the group of characters of $G$. That is, 
$\widehat G$ is the set of group homomorphisms from $G$ into the multiplicative 
group of $\field$, endowed with pointwise multiplication. 
Then $\widehat G$ is isomorphic to $G$. 

Take a polynomial algebra $\field[x_\chi\mid \chi\in \widehat G]$ 
whose variables are labeled by the elements of $\widehat G$. 
The group $G$ acts on  $\field[x_\chi\mid \chi\in \widehat G]$ via $\field$-algebra automorphisms, such that for each $\chi\in \widehat G$ and $g\in G$ we have 
\begin{equation*}\label{eq:action on x_chi} 
g\cdot x_\chi=\chi(g)^{-1}x.\end{equation*} 
It follows that  each monomial spans a $G$-invariant subspace in the polynomial ring, 
and for $b\in \mathcal{F}(\widehat G,\NN_0)$, the monomial  
$x^b:=\prod_{\chi\in\widehat G}x_\chi^{b(\chi)}$ is $G$-invariant if and only if $b\in \mathcal{B}(\widehat G)$. 
Consequently, we have the well known equality 
\begin{equation}\label{eq:B(G) span} 
\field[x_\chi\mid \chi\in\widehat G]^G=\mathrm{Span}_\field\{x^b\mid b\in \mathcal{B}(\widehat G)\}.
\end{equation} 
Consider the monomials associated to  $S$ (defined in \eqref{eq:S}): 
\begin{equation}\label{eq:T} T:=\{x^s\mid s\in S\}= \{x_{1_{\widehat G}},\quad x_\chi x_{\chi^{-1}}, \quad 
x_\chi x_\rho x_{(\chi\rho)^{-1}}\mid \chi, \rho\in \widehat G\setminus \{1_{\widehat G}\}\}.
\end{equation}  
Obviously, the elements of $T$ are $G$-invariant. 

The elements of $\field[x_\chi\mid \chi\in \widehat G]$ represent $\field$-valued polynomial functions on the $|\widehat G|$-dimensional $\field$-vector space 
$V$ 
with basis $\{e_\chi\mid \chi\in \widehat G\}$, namely for 
$v=\sum_{\chi\in \widehat G}v_\chi e_\chi$ and $x_\rho$ we have  
$x_\rho(v)=v_\rho$. 
The action of $G$ on  $\field[x_\chi\mid \chi\in \widehat G]$ is in fact induced by the following 
representation of $G$ on $V$: 
\[\text{for }g\in G,\quad v=\sum_{\chi\in \widehat G}v_\chi e_\chi\in V
\quad\text{ we have }
g\cdot v=\sum_{\chi\in\widehat G}\chi(g)v_\chi e_\chi.\]  

\begin{proposition} \label{prop:deg 3 separating on torus} 
Let $v,w$ be vectors in $V$ such that 
$v_\chi\neq 0$ and $w_\chi\neq 0$ for all $\chi\in\widehat G\setminus \{1_{\widehat G}\}$. 
Then the $G$-orbit of $v$ coincides with the $G$-orbit of $w$ if and only if 
$f(v)=f(w)$ for all $f\in T$ (defined in \eqref{eq:T}). 
\end{proposition}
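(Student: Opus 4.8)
The plan is to prove the non-trivial implication: assuming $f(v)=f(w)$ for all $f\in T$, I will produce a group element $g\in G$ with $g\cdot v=w$, which gives $G\cdot v=G\cdot w$. (The reverse implication is immediate, since every $f\in T$ is $G$-invariant, so $G\cdot v=G\cdot w$ forces $f(v)=f(w)$.) Recall that $g\cdot v=w$ means $\chi(g)v_\chi=w_\chi$ for every $\chi\in\widehat G$. Since $v_\chi\neq 0$ and $w_\chi\neq 0$ for all $\chi\neq 1_{\widehat G}$, I set $t_\chi:=w_\chi/v_\chi\in\field^\times$ for $\chi\neq 1_{\widehat G}$ and $t_{1_{\widehat G}}:=1$; the goal then becomes to show that the function $\chi\mapsto t_\chi$ coincides with $\chi\mapsto\chi(g)$ for a suitable $g\in G$. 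Evaluating $x_{1_{\widehat G}}\in T$ gives $v_{1_{\widehat G}}=w_{1_{\widehat G}}$, which settles the trivial character, since $1_{\widehat G}(g)=1$.

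First I would translate the equalities $f(v)=f(w)$ for $f\in T$ into multiplicative relations among the $t_\chi$. Extend $t$ to the group homomorphism $\tilde t\colon\mathcal{F}(\widehat G,\ZZ)\to\field^\times$ determined by $\delta_\chi\mapsto t_\chi$, so that $\tilde t(s)=\prod_{\chi}t_\chi^{s(\chi)}$. For $s=\delta_\chi+\delta_{\chi^{-1}}$ with $\chi\neq 1_{\widehat G}$, the coordinates in the support of $x_\chi x_{\chi^{-1}}\in T$ are non-vanishing, so dividing the equality $v_\chi v_{\chi^{-1}}=w_\chi w_{\chi^{-1}}$ by the non-zero quantity $v_\chi v_{\chi^{-1}}$ yields $\tilde t(s)=t_\chi t_{\chi^{-1}}=1$. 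For $s=\delta_\chi+\delta_\rho+\delta_{(\chi\rho)^{-1}}$ with $\chi,\rho\neq 1_{\widehat G}$ and $\chi\rho\neq 1_{\widehat G}$, all three indices are non-trivial, so the same division applied to $x_\chi x_\rho x_{(\chi\rho)^{-1}}\in T$ gives $\tilde t(s)=1$; in the remaining case $\chi\rho=1_{\widehat G}$ the relation $\tilde t(s)=t_\chi t_{\chi^{-1}}t_{1_{\widehat G}}=1$ already follows from the length-$2$ relation and $t_{1_{\widehat G}}=1$. Together with $\tilde t(\delta_{1_{\widehat G}})=1$, this shows $\tilde t(s)=1$ for every $s\in S$ defined in \eqref{eq:S}.

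The key step is to upgrade ``$\tilde t$ kills $S$'' to ``$\tilde t$ factors through $\widehat G$''. Let $\pi\colon\mathcal{F}(\widehat G,\ZZ)\to\widehat G$ be the homomorphism $\delta_\chi\mapsto\chi$, that is, $s\mapsto\prod_\chi\chi^{s(\chi)}$; it is surjective, and its kernel is exactly the subgroup $A=\{s\mid\prod_\chi\chi^{s(\chi)}=1_{\widehat G}\}$ occurring in the proof of Lemma~\ref{lemma:length 3 generators} (applied to $\widehat G$). That proof in fact shows $A\subseteq\langle S\rangle_\ZZ$, and since each element of $S$ lies in $A$ we obtain $A=\langle S\rangle_\ZZ$; hence $\tilde t$ vanishes on a generating set of $\ker\pi$, and therefore on all of $\ker\pi$. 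Consequently $\tilde t$ descends to a homomorphism $\bar t\colon\widehat G\to\field^\times$ with $\bar t(\chi)=t_\chi$ for all $\chi$. This is precisely where Lemma~\ref{lemma:length 3 generators} does the real work: it guarantees that the finitely many degree-$\le 3$ relations coming from $T$ already force the full multiplicativity needed for $t$ to be a character of $\widehat G$.

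Finally I would identify $\bar t$ with an element of $G$. The image of $\bar t$ is a finite subgroup of $\field^\times$ of exponent dividing $\mathrm{exp}(G)$, and since $\field$ contains an element of multiplicative order $\mathrm{exp}(G)$, the evaluation map $G\to\widehat{\widehat G}=\mathrm{Hom}(\widehat G,\field^\times)$, $g\mapsto(\chi\mapsto\chi(g))$, is an isomorphism. Thus there is $g\in G$ with $t_\chi=\bar t(\chi)=\chi(g)$ for all $\chi\in\widehat G$, whence $w_\chi=\chi(g)v_\chi$ for every $\chi$ (including $\chi=1_{\widehat G}$, where $w_{1_{\widehat G}}=v_{1_{\widehat G}}$). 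This means $w=g\cdot v$, so $G\cdot v=G\cdot w$, as desired. I expect the main points requiring care to be the bookkeeping for the characters with $\chi\rho=1_{\widehat G}$ and for the possibly vanishing coordinate $v_{1_{\widehat G}}$, together with the invocation of the duality isomorphism $\widehat{\widehat G}\cong G$ over the non-closed field $\field$, which is exactly why the hypothesis on $\field$ is imposed.
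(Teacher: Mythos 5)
Your proof is correct, and it takes a genuinely different route from the paper's. The paper never leaves the level of invariants: by Lemma~\ref{lemma:length 3 generators}, every invariant monomial $x^b$ with $b\in\mathcal{B}(\widehat G)$ can be written as $t_1\cdots t_k u_1^{-1}\cdots u_m^{-1}$ with $t_i\in T$ and $u_j\in T\setminus\{x_{1_{\widehat G}}\}$; since the $u_j$ do not involve $x_{1_{\widehat G}}$, they are non-zero at $v$ and $w$, so agreement of $v$ and $w$ on $T$ propagates to all of $\field[x_\chi\mid\chi\in\widehat G]^G$ by \eqref{eq:B(G) span}, and the conclusion $G\cdot v=G\cdot w$ then comes from the general separation property \eqref{eq:separability by invariants}. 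You argue dually and constructively: the ratios $t_\chi=w_\chi/v_\chi$ determine a homomorphism $\tilde t\colon\mathcal{F}(\widehat G,\ZZ)\to\field^\times$ killing $S$, hence killing $\ker\pi=A$ because $A=\langle S\rangle_\ZZ$; so $\tilde t$ descends to a character of $\widehat G$, which by the duality $G\cong\widehat{\widehat G}$ (valid since $\field$ contains an element of multiplicative order $\mathrm{exp}(G)$) is evaluation at some $g\in G$, giving the explicit equality $w=g\cdot v$. Your handling of the two delicate points is right: the triples with $\chi\rho=1_{\widehat G}$ yield no evaluation information when $v_{1_{\widehat G}}=0$, but none is needed since $\tilde t$ kills those elements of $S$ anyway; and the inclusion $A\subseteq\langle S\rangle_\ZZ$ is indeed what the paper's proof of Lemma~\ref{lemma:length 3 generators} establishes (it would even follow from the statement alone, since every $s\in A$ is a difference of two elements of $\mathcal{B}(\widehat G)$: add $\delta_{h^{-1}}$ to the positive and negative parts of $s$, where $h$ is their common image under $\pi$). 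The trade-off: the paper's argument is shorter because it can cite \eqref{eq:separability by invariants} and needs no case analysis or duality theory; yours avoids that citation entirely and produces the group element carrying $v$ to $w$ explicitly, which is stronger than mere orbit equality and potentially useful algorithmically.
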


\begin{proof} 
 By Lemma~\ref{lemma:length 3 generators}, for any $b\in  \mathcal{B}(\widehat G)$, 
 there exist $t_1,\dots,t_k\in T$, $u_1,\dots,u_m\in T\setminus \{x_{1_{\widehat G}}\}$ such that  
 $x^b=t_1\cdots t_ku_1^{-1}\cdots u_m^{-1}$ (this  equality takes place in the 
 Laurent polynomial ring  $\field[x_\chi^{\pm 1}\mid \chi\in \widehat G]$). 
The variable $x_{1_{\widehat G}}$ does not appear in the elements of $T\setminus\{x_{1_{\widehat G}}\}$, 
therefore none of $u_1,\dots,u_m$ involves the variable  $x_{1_{\widehat G}}$, 
so $u_j(z)\neq 0$ for $z\in \{v,w\}$ and $j=1,\dots,m$.  
 Consequently, for $z\in \{v,w\}$ we have 
 $x^b(z)=t_1(z)\cdots t_k(z)u_1(z)^{-1}\cdots u_m(z)^{-1}$. 
 It follows that $f(v)=f(w)$ for all $f\in T$ if and only if $x^b(v)=x^b(w)$ for all 
 $b\in \mathcal{B}(\widehat G)$. 
Thus by \eqref{eq:B(G) span}, $f(v)=f(w)$ for all $f\in T$ if and only if $h(v)=h(w)$ for all 
$h\in \field[x_\chi\mid \chi\in \widehat G]^G$. 
Since different orbits of a finite group can be separated by polynomial invariants 
(see \eqref{eq:separability by invariants}), 
we conclude that $f(v)=f(w)$ for all $f\in T$ if and only 
$G\cdot v=G\cdot w$. 
 \end{proof} 

\section{Prime order cyclic groups over the rationals} \label{sec:rational}

Any finite group  $G$ acts on itself via left multiplication. This induces a linear action on the 
$\FF$-vector space $\mathcal{F}(G,\FF)$ of $\FF$-valued functions on $G$: 
for $f\in \mathcal{F}(G,\FF)$, $g,h\in G$ we have $(g\cdot f)(h)=f(g^{-1}h)$. 
This representation 
is called the \emph{regular representation of $G$ over $\FF$}. 
More concretely, the elements $\delta_g$ ($g\in G$) defined in \eqref{eq:delta} 
form a $\FF$-vector space basis in $\mathcal{F}(G,\FF)$, and for $h\in G$ we have 
$h\cdot \delta_g=\delta_{hg}$.  

When the characteristic of $\FF$ does not divide $|G|$, all irreducible representations 
of $G$ over $\FF$ occur as a direct summand in $\mathcal{F}(G,\FF)$.  Moreover, when 
$\FF$ is also algebraically closed, the multiplicity 
of each irreducible representation   of $G$ over 
$\FF$ equals the dimension of the representation. 

\begin{theorem}\label{thm:main}  
Consider the regular representation on $V:=\mathcal{F}(G,\QQ)$ of the cyclic group $G:=\mathrm{C}_p$  of  prime order $p$
over the field $\QQ$ of rational numbers. 
Then for any $v,w\in V$ with $G\cdot v\neq G\cdot w$ there exists an 
$f\in \QQ[V]^G$ with $\deg(f)\le 3$ and $f(v)\neq f(w)$. 
\end{theorem}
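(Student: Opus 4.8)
The plan is to deduce Theorem~\ref{thm:main} from Proposition~\ref{prop:deg 3 separating on torus} by extending scalars from $\QQ$ to $\field=\QQ(\zeta_p)$, where $\zeta_p$ is a primitive $p$-th root of unity and the regular representation diagonalizes. Write $y_g\in\QQ[V]$ for the coordinate function $y_g(v)=v(g)$ dual to $\delta_g$, so that $h\cdot y_g=y_{hg}$. The Fourier transform then identifies $V\otimes_\QQ\field$ with the representation of Proposition~\ref{prop:deg 3 separating on torus}: the coordinate of $\tilde v\in V\otimes_\QQ\field$ indexed by $\chi\in\widehat G$ is the Fourier coefficient $v_\chi=\sum_{g\in G}\chi(g)^{-1}v(g)$, and $g$ acts on $e_\chi$ by $\chi(g)$. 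Since $G$ is finite, $G\cdot v=G\cdot w$ holds in $V$ if and only if it holds in $V\otimes_\QQ\field$, so I may pass freely between the two. The guiding idea is that the $\field$-valued invariant monomials of $T$ from \eqref{eq:T} are recovered, over $\QQ$, by rational correlation invariants of degree at most $3$.

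The point special to prime order is a rationality dichotomy. For $v\in V=\mathcal{F}(G,\QQ)$ the Galois group $\Gamma=\mathrm{Gal}(\field/\QQ)\cong(\ZZ/p\ZZ)^\times$ satisfies $\sigma(v_\chi)=v_{\sigma\chi}$, and it acts transitively on the $p-1$ nontrivial characters (since $\sigma_a\colon\chi^j\mapsto\chi^{aj}$). Hence the nontrivial Fourier coordinates $v_\chi$ ($\chi\neq 1_{\widehat G}$) are Galois conjugate, so they are either all zero or all nonzero. They are all zero precisely when $v$ is a constant function, i.e.\ a $G$-fixed vector; otherwise $v$ satisfies the nonvanishing hypothesis $v_\chi\neq 0$ of Proposition~\ref{prop:deg 3 separating on torus}. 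This dichotomy lets me split into a nondegenerate case, where the Proposition applies, and a constant case handled directly.

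For the nondegenerate case I would introduce the degree $\le 3$ rational invariants
\[
\pi=\sum_{g\in G}y_g,\qquad c_a=\sum_{g\in G}y_g\,y_{ag},\qquad c_{a,b}=\sum_{g\in G}y_g\,y_{ag}\,y_{bg}\qquad(a,b\in G),
\]
all of which lie in $\QQ[V]^G$ (using that $G$ is abelian). The heart of the argument is a Fourier computation showing that the values of these invariants determine the values of every monomial in $T$: expanding $y_g(v)=\tfrac1p\sum_{\chi}\chi(g)v_\chi$ gives
\[
c_a(v)=\frac1p\sum_{\chi\in\widehat G}\chi(a)^{-1}\,v_\chi v_{\chi^{-1}},\qquad c_{a,b}(v)=\frac1{p^2}\sum_{\chi,\rho\in\widehat G}\rho(a)\,(\chi\rho)^{-1}(b)\,v_\chi v_\rho v_{(\chi\rho)^{-1}},
\]
and the linear maps sending the monomial values $v_\chi v_{\chi^{-1}}$ (resp.\ $v_\chi v_\rho v_{(\chi\rho)^{-1}}$) to the tuples $(c_a(v))_a$ (resp.\ $(c_{a,b}(v))_{a,b}$) are invertible over $\field$, since their matrices are built from distinct, hence linearly independent, characters of $G$ (resp.\ of $G\times G$, using that $(\chi,\rho)\mapsto(\rho,(\chi\rho)^{-1})$ is a bijection). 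Together with $\pi(v)=v_{1_{\widehat G}}$, this shows that if $\pi$, all $c_a$, and all $c_{a,b}$ agree on $v$ and $w$, then $t(v)=t(w)$ for every $t\in T$.

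Finally I would assemble the cases. In the nondegenerate case, if $G\cdot v\neq G\cdot w$ then Proposition~\ref{prop:deg 3 separating on torus} yields some $t\in T$ with $t(v)\neq t(w)$, so by the previous paragraph one of the rational invariants $\pi,c_a,c_{a,b}$ (all of degree $\le 3$) separates $v$ and $w$. If both $v,w$ are constant functions their orbits are singletons, so $v\neq w$ and $\pi$ already separates them. If exactly one, say $v$, is constant while $w$ is not, then the degree $2$ invariant $p\sum_{g}y_g^2-\pi^2$ vanishes on $v$ but is strictly positive on $w$ by Cauchy--Schwarz, hence separates them. This exhausts all cases. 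I expect the main obstacle to be the Fourier-inversion step of the third paragraph, namely verifying that the rational correlation invariants recover the full list of $\field$-valued monomial values in $T$; the preliminary rationality dichotomy is what makes Proposition~\ref{prop:deg 3 separating on torus} applicable and keeps the degenerate cases trivial.
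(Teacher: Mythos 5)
Your proof is correct, and its overall skeleton coincides with the paper's: extend scalars to $\field=\QQ(\omega)$, exploit the rationality dichotomy special to prime order (the nontrivial Fourier coefficients of a rational vector are either all zero or all nonzero), dispose of the degenerate cases separately, and invoke Proposition~\ref{prop:deg 3 separating on torus} in the nondegenerate case. Where you genuinely diverge is the transfer step between rational invariants and the monomial set $T$. The paper handles this in one sentence, in the contrapositive direction: since the $G$-action on $\field\otimes_\QQ V$ comes from a representation defined over $\QQ$, each homogeneous component of $\field[\field\otimes_\QQ V]^G$ is spanned over $\field$ by the corresponding homogeneous component of $\QQ[V]^G$, so agreement of all rational invariants of degree at most $3$ forces agreement of all elements of $T$. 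You instead run the argument directly and prove a constructive instance of this spanning fact: you introduce the explicit correlation invariants $\pi$, $c_a$, $c_{a,b}$ and show, via linear independence of the characters of $G$ and of $G\times G$ (your Fourier-inversion step, which is computed correctly), that their values determine the values of every monomial in $T$. What your route buys: it is self-contained (no appeal to how invariants behave under scalar extension), it exhibits a concrete separating set of $1+p+p^2$ rational invariants of degree at most $3$ --- precisely the mean, autocorrelation and triple correlation familiar from the multireference alignment literature motivating the paper --- and your Cauchy--Schwarz invariant $p\sum_g y_g^2-\pi^2$ gives an explicit degree-$2$ separator in the mixed case (one vector constant, the other not), where the paper instead observes that agreement of the monomials $x_\chi x_{\chi^{-1}}$ rules that case out. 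What it costs is length: the paper's abstract spanning observation makes your inversion argument unnecessary. Finally, your Galois-transitivity proof of the dichotomy and the paper's argument via the minimal polynomial $1+x+\cdots+x^{p-1}$ of $\omega$ are two phrasings of the same fact, namely $[\QQ(\omega):\QQ]=p-1$.
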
 

\begin{proof}
Let $\omega$ be a primitive $p$th root of $1$, and $K:=\QQ(\omega)$ the $p$th cyclotomic extension of $\QQ$. 
For a character $\chi\in \widehat G$ set 
\[e_\chi:=\frac 1p\sum_{g\in G}\chi(g)^{-1}\delta_g, \] 
where  $\{\delta_g\mid g\in G\}$ is the basis of $\mathcal{F}(G,\QQ)$ defined in \eqref{eq:delta}.   
Then $\{e_\chi\mid\chi\in \widehat G\}$ is a $\field$-vector space basis of $V_\field:=\field\otimes_\QQ V$, 
satisfying 
\[g\cdot e_\chi=\chi(g)e_\chi \quad \text{ for all }g\in G, \quad \chi\in \widehat G.\] 
Note that $\chi(g)^{-1}$ is the complex conjugate 
of $\chi(g)$, and therefore the second orthogonality relation of characters 
(cf. \cite[Chapter 2, Proposition 7]{serre}) implies that 
\begin{equation}\label{eq:delta in basis e} 
\delta_g=\sum_{\chi\in\widehat G}\chi(g)e_{\chi} \quad \text{ for }g\in G.  
\end{equation}  
Now assume that for $v,w\in V$, $f(v)=f(w)$ for all $f\in \QQ[V]^G$ with 
$\deg(f)\le 3$. We claim that $G\cdot v=G\cdot w$. 
Since the $G$-action on $\field\otimes_\QQ V$ was obtained 
from the representation of $G$ on $V$ by extending scalars, each homogeneous component 
of $\field[\field\otimes_\QQ V]^G$ is spanned as a $\field$-vector space 
by the corresponding homogeneous component of $\QQ[V]^G$. 
Therefore we have that $h(v)=h(w)$ for all $h\in \field[\field\otimes_\QQ V]^G$ with 
$\deg(h)\le 3$. In particular, 
\begin{equation}\label{eq:h(v)=h(w)} 
h(v)=h(w) \quad\text{ for all }\quad h\in T, \end{equation} 
where $T$ is defined in 
\eqref{eq:T}, and $\field[\field\otimes_\QQ V]=\field[x_\chi\mid\chi\in \widehat G]$, 
and $\{x_\chi\mid \chi\in \widehat G\}$ is the basis dual to the basis 
$\{e_\chi\mid \chi\in \widehat G\}$ in $\field\otimes_\QQ V$.   

By \eqref{eq:delta in basis e}, the   $\QQ$-vector subspace $V=\{\sum_{g\in G}v_g\delta_g\mid v_g\in\QQ\}$ of 
$\field\otimes_\QQ V$ is 
\[V=\{\sum_{\chi\in\widehat G}(\sum_{g\in G}\chi(g)v_g)e_\chi\mid v_g\in\QQ\}.\] 
Note that for $\chi\in \widehat G\setminus\{1_{\widehat G}\}$ we have 
\[\{\chi(g)\mid g\in G\}=\{1,\omega,\omega^2,\dots,\omega^{p-1}\}.\] 
The minimal polynomial of $\omega$ over $\QQ$ is $1+x+x^2+\cdots+x^{p-1}$.  
Therefore $\sum_{g\in G}\chi(g)z_g=0$ for some 
$z=\sum_{g\in G}z_g\delta_g\in V$ and $\chi\in \widehat G\setminus\{1_{\widehat G}\}$ if and only if $z=c\sum_{g\in G}\delta_g$ for some $c\in \QQ$, 
or in other words, $z_g=c\in \QQ$ for all $g\in G$. Moreover, in this case $z=cpe_{1_{\widehat G}}$.  
So 
\[V\subset \QQ e_{1_{\widehat G}}\bigcup \{\sum_{\chi\in\widehat G}c_\chi e_\chi
\mid c_\chi\in \field,\ c_\chi\neq 0\text{ for }\chi\neq 1_{\widehat G}\}.\]

If both $v$ and $w$ belong to  $\QQ\sum_{g\in G}\delta_g$, then 
$v=c_1\sum_{g\in G}\delta_g=c_1pe_{1_{\widehat G}}$, $w=c_2\sum_{g\in G}\delta_g=c_2pe_{1_{\widehat G}}$, 
hence by \eqref{eq:h(v)=h(w)} we have $c_1p=x_{1_{\widehat G}}(v)=x_{1_{\widehat G}}(w)=c_2p$,  
implying  that $c_1=c_2$, and therefore $v=w$. 
 
If at least one of $v,w$, say $v$ does not belong to  $\QQ\sum_{g\in G}\delta_g$, then take any $\chi\notin\widehat G\setminus \{1_{\widehat G}\}$. 
We have that $(x_\chi x_{\chi^{-1}})(v) \neq 0$, whence  by \eqref{eq:h(v)=h(w)} we get 
$(x_\chi x_{\chi^{-1}})(w)\neq 0$, implying in turn that 
$w$ does not belong to  $\QQ\sum_{g\in G}\delta_g$. 

So it remains to deal with the case when  
none of $v$ and $w$ belong to $\QQ\sum_{g\in G}\delta_g$. Then their coordinates 
with respect to the basis $\{e_\chi\mid \chi\in \widehat G\}$ are all non-zero, except possibly the coordinate 
corresponding to $e_{1_{\widehat G}}$. By \eqref{eq:h(v)=h(w)} and 
Proposition~\ref{prop:deg 3 separating on torus} we conclude that $G\cdot v=G\cdot w$. 
\end{proof}

\begin{corollary}\label{cor:all rational reps} 
For the cyclic group $\mathrm{C}_p$ of prime order $p$ we have the equality 
\[\sepbeta^\QQ(\mathrm{C}_p)=\begin{cases} 3 &\quad \text{ for  }p\ge3; \\
2 &\quad \text{ for }p=2.\end{cases}\]  
\end{corollary}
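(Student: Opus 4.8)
The plan is to establish the two inequalities $\sepbeta^\QQ(\mathrm{C}_p)\le 3$ and $\sepbeta^\QQ(\mathrm{C}_p)\ge 3$ for $p\ge 3$ (and the bounds $\le 2$, $\ge 2$ for $p=2$). The upper bound is the substantial half: Theorem~\ref{thm:main} treats only the regular representation, so it must be promoted to \emph{all} finite dimensional $\QQ\mathrm{C}_p$-modules, which is where multiplicities enter. The lower bound comes from an explicit pair of points in a single irreducible module.

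For the upper bound I would first recall that, since $\mathrm{char}(\QQ)=0$, every $\QQ\mathrm{C}_p$-module is semisimple, and that $\mathrm{C}_p$ has exactly two irreducible $\QQ$-modules: the trivial module and the $(p-1)$-dimensional module $\QQ(\omega)$ on which a generator acts as multiplication by $\omega$ (the nontrivial characters form a single $\mathrm{Gal}(\QQ(\omega)/\QQ)$-orbit). Hence every finite dimensional $\QQ\mathrm{C}_p$-module has the form $V\cong \QQ^{\oplus a}\oplus \QQ(\omega)^{\oplus b}$, and it suffices to prove that invariants of degree $\le 3$ separate the $G$-orbits on such a $V$. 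I would run the argument of Theorem~\ref{thm:main} essentially verbatim, the only new ingredient being the version of Proposition~\ref{prop:deg 3 separating on torus} that allows each character to occur with multiplicity. This generalization is free: labelling the variables by $\widehat G\times\{1,\dots,b\}$ and assigning to $x_{(\chi,i)}$ the weight $\chi$, the associated block monoid is $\{s\in\mathcal F(\widehat G\times\{1,\dots,b\},\NN_0)\mid \prod_{(\chi,i)}\chi^{s(\chi,i)}=1_{\widehat G}\}$, and the induction proving Lemma~\ref{lemma:length 3 generators} goes through unchanged, as it only uses the group law on the weights. Consequently the conclusion of Proposition~\ref{prop:deg 3 separating on torus} holds for $\QQ(\omega)^{\oplus b}$: two points all of whose $K$-coordinates attached to the nontrivial characters are nonzero lie in the same orbit as soon as they agree on all invariants coming from the length $\le 3$ generators.

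The point that needs care, and the heart of why the bound is $3$ rather than $p$, is the treatment of points with vanishing coordinates. After extending scalars to $K=\QQ(\omega)$ and passing to the eigenbasis $e_\chi$ as in Theorem~\ref{thm:main}, I would exploit the minimal-polynomial rigidity copy by copy: for a rational vector, the $K$-coordinate of its $i$-th $\QQ(\omega)$-summand in the $\chi$-eigenspace is a Galois conjugate of a single element of $\QQ(\omega)$, so for each fixed $i$ these coordinates (over the nontrivial $\chi$) are either all zero or all nonzero. Thus each $\QQ(\omega)$-summand is either ``inactive'' (its vector is $0$) or ``active'' (all its nontrivial coordinates are nonzero), with no intermediate boundary stratum. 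The degree $2$ invariants $x_{(\chi,i)}x_{(\chi^{-1},i)}$ detect which summands are active and force $v,w$ to have the same active set, while the linear invariants $x_{(1_{\widehat G},i)}$ pin down the trivial summands; restricting to the active summands, all nontrivial coordinates are nonzero and the multiplicity version of Proposition~\ref{prop:deg 3 separating on torus} finishes the proof that $G\cdot v=G\cdot w$. As in Theorem~\ref{thm:main}, equality of all invariants of degree $\le 3$ over $\QQ$ yields equality of all invariants of degree $\le 3$ over $K$, since the homogeneous components of $K[V_K]^G$ are spanned by those of $\QQ[V]^G$. For $p=2$ the upper bound $\sepbeta^\QQ(\mathrm{C}_2)\le 2$ is immediate from $\sepbeta^\QQ(\mathrm{C}_2)\le|\mathrm{C}_2|=2$ in \eqref{eq:sepbeta<beta}.

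For the lower bound I would produce, in a single irreducible module, two points in distinct orbits that no invariant of degree $\le 2$ distinguishes. For $p\ge 3$ take $V=\QQ(\omega)$ with $g$ acting as multiplication by $\omega$, and set $v=1$, $w=-1$. Their orbits are the set of $p$-th roots of unity and its negative; these are disjoint because $-1\notin\langle\omega\rangle$ for $p$ odd. The module $\QQ(\omega)$ contains no trivial summand, so there is no nonzero invariant of degree $1$; by \eqref{eq:B(G) span} the invariants of degree $2$ are spanned over $K$ by the monomials $x_\chi x_{\chi^{-1}}$ with $\chi\neq 1_{\widehat G}$, whose values at $v$ and at $w$ are $\sigma_\chi(\pm1)\sigma_{\chi^{-1}}(\pm1)=1$ in both cases. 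Hence $v$ and $w$ agree on all invariants of degree $\le 2$ but lie in different orbits, giving $\sepbeta^\QQ(\mathrm{C}_p)\ge 3$. For $p=2$ the same role is played by the sign representation with $v=1$, $w=2$: the only invariant of degree $\le 1$ is constant, while $x^2$ separates them, so $\sepbeta^\QQ(\mathrm{C}_2)\ge 2$. Combining the two bounds in each case yields the stated values. The main obstacle throughout is the multiplicity bookkeeping in the upper bound; once the copy-wise Galois rigidity is in place, the problem reduces cleanly to the multiplicity version of Proposition~\ref{prop:deg 3 separating on torus}.
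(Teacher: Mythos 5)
Your proof is correct, but the way you handle the upper bound differs from the paper in its key step. The paper does not re-run the block-monoid machinery with multiplicities: it notes that every $\QQ\mathrm{C}_p$-module is a direct summand of $\mathcal{F}(\mathrm{C}_p,\QQ)^{\oplus m}$, and then collapses the multiplicities by citing polarization-type results --- \cite[Propositions 4.3 and 4.4]{domokos:typical} to reduce to $m=2$ and \cite[Theorem 3.4]{draisma-kemper-wehlau} to reduce from two copies to one --- so that Theorem~\ref{thm:main} for the single regular representation immediately gives $\sepbeta^\QQ(\mathrm{C}_p)\le 3$. You instead classify the $\QQ\mathrm{C}_p$-modules as $\QQ^{\oplus a}\oplus\QQ(\omega)^{\oplus b}$ and prove a multiplicity version of Lemma~\ref{lemma:length 3 generators} and Proposition~\ref{prop:deg 3 separating on torus} directly, using copy-wise Galois rigidity to dispose of the vanishing-coordinate strata; this is in effect a hands-on proof of the special case of polarization that the paper imports. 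Your inductive claim is essentially right, though ``goes through unchanged'' is slightly optimistic: in the multiplicity setting the length-$2$ case acquires the extra possibility $\delta_{(\chi,i)}-\delta_{(\chi,j)}$ (same character, different copies), which one must observe equals $\bigl(\delta_{(\chi,i)}+\delta_{(\chi^{-1},k)}\bigr)-\bigl(\delta_{(\chi,j)}+\delta_{(\chi^{-1},k)}\bigr)$ and hence still lies in the $\ZZ$-span of the degree $\le 3$ generators. Your route is longer but self-contained, avoiding the two external citations; the paper's is shorter and leverages general theorems that apply beyond this situation. The lower bounds are the same idea (an antipodal pair not separated in degree $\le 2$) realized in different modules: you use $\pm 1$ in the irreducible $(p-1)$-dimensional module $\QQ(\omega)$ (and the sign representation for $p=2$), while the paper uses $\delta_{1_G}-\delta_g$ and its negative inside the regular representation, separated in degree $3$ by the orbit sum of $x_{1_G}^2x_g$; both verifications are sound.
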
 

\begin{proof} 
For $p=2$ we have  $\sepbeta^\QQ(\mathrm{C}_2)\le 2$ by \eqref{eq:sepbeta<beta}, and  
the regular representation of $\mathrm{C}_2$ shows that we have equality here. 
For $p>2$ we know that $\sepbeta(\mathrm{C}_p,\mathcal{F}(\mathrm{C}_p,\QQ))\le 3$ 
by Theorem~\ref{thm:main}. Moreover, any finite dimensional representation 
of $\mathrm{C}_p$ over $\QQ$ is a direct summand of 
$\mathcal{F}(\mathrm{C}_p,\QQ)^{\oplus m}$, the direct sum of $m$ copies of the 
regular representation for some $m$, hence 
$\sepbeta^\QQ(\mathrm{C}_p)\le 
\max_{m\in \NN}\{\sepbeta(\mathrm{C}_p,\mathcal{F}(\mathrm{C}_p,\QQ)^{\oplus m})\}$. 
The latter number equals $\sepbeta(\mathrm{C}_p,\mathcal{F}(\mathrm{C}_p,\QQ)^{\oplus 2})$ 
by \cite[Proposition 4.3 and 4.4]{domokos:typical}. Moreover, 
$\sepbeta(\mathrm{C}_p,\mathcal{F}(\mathrm{C}_p,\QQ)^{\oplus 2})=\sepbeta(\mathrm{C}_p,\mathcal{F}(\mathrm{C}_p,\QQ))$  by \cite[Theorem 3.4]{draisma-kemper-wehlau}. 
So we proved the inequality $\sepbeta^\QQ(\mathrm{C}_p)\le 3$. 

To prove the reverse inequality for $p\ge 3$, consider the regular representation 
$\mathcal{F}(\mathrm{C}_p,\QQ)$. Denote by $g$ a generator of $\mathrm{C_p}$. 
Consider the element $v:=\delta_{1_G}-\delta_{g}$ and its negative $-v$. 
Denote by $\{x_h\mid h\in G\}$ the basis dual to the basis 
$\{\delta_h\mid h\in G\}$ in $\mathcal{F}(\mathrm{C}_p,\QQ)$. 
Then $\QQ[V]$ is the polynomial algebra $\QQ[x_h\mid h\in G]$, and 
$g\cdot x_h=x_{gh}$. 
Since $G$ acts on $\QQ[V]$ by permuting the variables, 
the homogeneous components of 
$\QQ[V]^{\mathrm{C}_p}$ are spanned by orbit sums of monomials. 
The only degree $1$ orbit sum $\sum_{h\in \mathrm{C}_p}x_h$ 
vanishes on both $v$ and $w$. 
The orbit sum of $x_{1_G}^2$ 
takes value 2 on both $v$ and $w$, whereas the orbit sum of 
$x_{1_G}x_g$ takes value $-1$ on both $v$ and $w$. All the other quadratic orbit sums vanish on both $v$ and $-v$. Thus $v$ and $-v$ can not be separated by invariants of degree at most $2$. However, they have different $\mathrm{C}_p$-orbits, 
because the orbit sum of $x_{1_G}^2x_g$ takes value $-1$ on $v$ and value $1$ on $-v$, 
when $p>2$. 
\end{proof} 

\begin{remark}
For comparison, we mention the well known  equality $\sepbeta^\CC(\mathrm{C}_p)=p$. 
Moreover, it is proved in \cite[Proposition 3.13]{blumsmith-garcia-hidalgo-rodriguez} that 
even over the field $\RR$ of real numbers we have 
$\sepbeta^\RR(\mathrm{C}_p)=p$. 
 \end{remark} 
 
Theorem~\ref{thm:main} does not extend to all cyclic groups whose order is not prime, 
as Proposition~\ref{prop:C4} below shows: 

\begin{proposition}\label{prop:C4} 
The points $v:=[3,4,-3.-4]^T\in \QQ^4$ and 
$w:=[5,0,-5,0]^T\in \QQ^4$ have different $\mathrm{C_4}$-orbit, 
where the cyclic group $\mathrm{C}_4$ acts on $\QQ^4$ by cyclically permuting the coordinates of vectors, and all polynomial $\mathrm{C}_4$-invariants on $\QQ^4$ of degree at most $3$ take the same value on $v$ and $w$. 
In particular, we have 
\[\sepbeta^\QQ(\mathrm{C}_4)=4.\]  
\end{proposition}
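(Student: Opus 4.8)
The plan is to exploit the fact that both points lie in a distinguished $2$-dimensional subspace on which $\mathrm{C}_4$ acts as a group of planar rotations, which collapses the whole verification to a few lines. Write $\mathrm{C}_4=\langle g\rangle$ acting by the cyclic shift $g\cdot(a_0,a_1,a_2,a_3)=(a_1,a_2,a_3,a_0)$, and let $x_0,x_1,x_2,x_3$ be the coordinate functions, so that $\QQ[V]=\QQ[x_0,x_1,x_2,x_3]$ and $g$ permutes the variables cyclically. Since $G$ permutes the monomial basis, each homogeneous component of $\QQ[V]^G$ is spanned over $\QQ$ by orbit sums of monomials; hence two points are separated by an invariant of degree at most $3$ if and only if some orbit sum of degree at most $3$ separates them. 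The key observation is that both $v=(3,4,-3,-4)$ and $w=(5,0,-5,0)$ satisfy $a_{i+2}=-a_i$, that is, they lie in $W:=\{u\in\QQ^4\mid g^2u=-u\}$, a $2$-dimensional subspace with coordinates $(u_0,u_1)$ (so $u_2=-u_0$, $u_3=-u_1$), on which $g$ acts as the order-$4$ rotation $(u_0,u_1)\mapsto(u_1,-u_0)$.

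First I would dispatch all odd-degree invariants simultaneously. For any invariant $f$ and any $u\in W$, the point $g^2u=-u$ lies in the $G$-orbit of $u$, so invariance gives $f(-u)=f(g^2u)=f(u)$. If $f$ is homogeneous of odd degree, then also $f(-u)=-f(u)$, forcing $f(u)=0$. Thus every invariant of degree $1$ or $3$ vanishes at both $v$ and $w$, and only the quadratic invariants remain to be examined.

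Next I would handle degree $2$. The restriction to $W$ of a $G$-invariant quadratic is a quadratic form on $W$ that is invariant under the $90^\circ$ rotation above; a direct check shows such forms are exactly the scalar multiples of $u_0^2+u_1^2$. Hence for each invariant quadratic $f$ one has $f|_W=c_f(u_0^2+u_1^2)$ for some scalar $c_f$. Since $v$ and $w$ correspond to $(3,4)$ and $(5,0)$, which share the norm $3^2+4^2=5^2=25$, we get $f(v)=25\,c_f=f(w)$. Combined with the previous step, every invariant of degree at most $3$ takes the same value on $v$ and $w$.

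Finally I would separate the orbits and conclude. Listing the four cyclic shifts of $v$ shows that none equals $w$; equivalently, the degree-$4$ invariant $x_0^4+x_1^4+x_2^4+x_3^4$ takes the value $2(3^4+4^4)=674$ on $v$ but $2\cdot 5^4=1250$ on $w$, so $G\cdot v\neq G\cdot w$ and the two orbits are separated already in degree $4$. Therefore $\sepbeta(\mathrm{C}_4,\QQ^4)\ge 4$, whence $\sepbeta^\QQ(\mathrm{C}_4)\ge 4$, while the reverse inequality is the general bound \eqref{eq:sepbeta<beta}, giving $\sepbeta^\QQ(\mathrm{C}_4)=4$. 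I do not expect a genuine obstacle here: the only real content is recognizing the example itself, namely two equal-norm rational points (a Pythagorean triple) lying in distinct rotation orbits of the $(-1)$-eigenspace of $g^2$. The structural framing (odd degrees vanish automatically, quadratics collapse to the norm) is precisely what replaces a brute-force enumeration of all orbit sums up to degree $3$; the one place requiring a little care is verifying that the invariant quadratics restrict to multiples of $u_0^2+u_1^2$ on $W$, which is a two-line computation.
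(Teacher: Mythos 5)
Your proof is correct, but it follows a genuinely different route from the paper. The paper argues by brute force: it lists a minimal homogeneous generating system of $\QQ[x_1,x_2,x_3,x_4]^{\mathrm{C}_4}$ (orbit sums of monomials of degree at most $4$), evaluates all seven generators at $v$ and $w$, and observes that only the two degree-$4$ generators differ; since every invariant of degree at most $3$ is a polynomial in the generators of degree at most $3$, the conclusion follows. You instead exploit the structure of the example: both points lie in the $(-1)$-eigenspace $W$ of $g^2$ (which is $g$-stable since $g$ commutes with $g^2$), so every odd-degree homogeneous invariant vanishes there by the parity argument $f(u)=f(g^2u)=f(-u)=-f(u)$, and every invariant quadratic restricts on $W$ to a multiple of the rotation-invariant form $u_0^2+u_1^2$, which cannot distinguish $(3,4)$ from $(5,0)$ because $3^2+4^2=5^2$. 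Together with the decomposition of an arbitrary invariant of degree at most $3$ into its (automatically invariant) homogeneous components, this covers all invariants of degree at most $3$ without ever computing a generating set. What your approach buys is brevity and an explanation of \emph{why} the example works — it is exactly the Pythagorean coincidence of two equal-norm rational points in distinct rotation orbits — and it generalizes to produce further examples; what the paper's computation buys is an explicit list of generators, exhibiting concretely which degree-$4$ invariants ($x_1^4+x_2^4+x_3^4+x_4^4$ and $x_1x_2^3+x_2x_3^3+x_3x_4^3+x_4x_1^3$) do the separating. Your final step (distinct orbits, hence $\sepbeta(\mathrm{C}_4,\QQ^4)\ge 4$, with the reverse inequality from \eqref{eq:sepbeta<beta}) coincides with the paper's.
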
 

\begin{proof}
Denote by $x_1,x_2,x_3,x_4$ the coordinate functions on $\QQ^4$, so for a generator 
$g$ of $\mathrm{C}_4$ we have $g\cdot x_1=x_2$, $g\cdot x_2=x_3$, 
$g\cdot x_3=x_4$, $g\cdot x_4=x_1$. The algebra $\QQ[x_1,x_2,x_3,x_4]^{\mathrm{C}_4}$ of polynomial $\mathrm{C}_4$-invariants is generated by orbit sums of monomials of 
degree at most $4$. Therefore it is easy to see that $\QQ[x_1,x_2,x_3,x_4]^{\mathrm{C}_4}$ is minimally generated by 
$f_1:=x_1+x_2+x_3+x_4$, 
$f_2:=x_1^2+x_2^2+x_3^2+x_4^2$, 
$f_3:=x_1x_2+x_2x_3+x_3x_4+x_4x_1$, 
$f_4:=x_1^3+x_2^3+x_3^3+x_4^3$, 
$f_5=x_1x_2^2+x_2x_3^2+x_3x_4^2+x_4x_1^2$, 
$f_6:=x_1^4 + x_2^4 + x_3^4 + x_4^4$, 
$f_7:=x_1x_2^3 + x_2x_3^3 + x_3x_4^3 + x_4x_1^3$. 
Now we have 
\begin{align*}
[f_1(v),f_2(v),f_3(v),f_4(v),f_5(v),f_6(v),f_7(v)]&=
[0, 50, 0, 0, 0, 674, 168]\\ 
[f_1(w),f_2(w),f_3(w),f_4(w),f_5(w),f_6(w),f_7(w)]&=[0, 50, 0, 0, 0, 1250, 0], 
\end{align*} 
showing that only the degree $4$ generators differ on $v$ and $w$.  
Thus we have $\sepbeta^\QQ(\mathrm{C}_4)\ge 4$. 
The reverse inequality holds by \eqref{eq:sepbeta<beta}. 
\end{proof}  

\begin{proposition}\label{prop:S3} 
Invariants of degree at most $3$ are not sufficient to separate all the orbits in 
the regular representation of $\mathrm{S}_3$ over the field of rational numbers. 
Moreover, we have 
\[\sepbeta^\QQ(\mathrm{S}_3)=4.\] 
\end{proposition}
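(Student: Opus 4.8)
The plan is to prove the two inequalities $\sepbeta^\QQ(\mathrm{S}_3)\ge 4$ and $\sepbeta^\QQ(\mathrm{S}_3)\le 4$ separately; the first of these is exactly the assertion that invariants of degree at most $3$ fail to separate all orbits of the regular representation.

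For the upper bound I would use \eqref{eq:sepbeta<beta}, which gives $\sepbeta^\QQ(\mathrm{S}_3)\le \beta^\QQ(\mathrm{S}_3)$. Here, in contrast with Proposition~\ref{prop:C4}, the crude bound $\beta^\QQ(\mathrm{S}_3)\le |\mathrm{S}_3|=6$ is not sharp enough, so I would instead invoke the exact value of the Noether number. Since $\QQ$ is a splitting field for $\mathrm{S}_3$ (its trivial, sign and $2$-dimensional standard representations are all realized over $\QQ$), the Noether number over $\QQ$ coincides with the one over the algebraic closure, and for $\mathrm{S}_3$, the dihedral group of order $6$, one has $\beta^\QQ(\mathrm{S}_3)=4$ (see \cite{cziszter-domokos-szollosi}). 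This gives $\sepbeta^\QQ(\mathrm{S}_3)\le 4$.

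The substance is the lower bound, for which I would exhibit an explicit pair $v,w\in \mathcal{F}(\mathrm{S}_3,\QQ)\cong\QQ^6$ in distinct $\mathrm{S}_3$-orbits on which every invariant of degree at most $3$ takes equal values, in the spirit of Proposition~\ref{prop:C4}. The structural point is that $\mathrm{S}_3$ acts on $\QQ[V]=\QQ[x_g\mid g\in \mathrm{S}_3]$ by permuting the six variables through the free transitive left regular action, so each homogeneous component of $\QQ[V]^{\mathrm{S}_3}$ is spanned by orbit sums of monomials, and it suffices to match $v$ and $w$ on the finitely many orbit sums of degrees $1,2,3$. These I would enumerate explicitly: in degree $1$ the single sum $\sum_g x_g$; in degree $2$ the power sum $\sum_g x_g^2$ together with the off-diagonal sums $\sum_g x_g x_{gd}$ indexed by the pairs $\{d,d^{-1}\}$ of nontrivial $d\in\mathrm{S}_3$; and in degree $3$ the cube sum $\sum_g x_g^3$, the sums $\sum_g x_g^2 x_{gd}$, and the orbit sums over the left-translation classes of $3$-element subsets of $\mathrm{S}_3$. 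Arranging the six coordinates according to the two cosets of the normal subgroup $\langle a\rangle$ (with $a$ of order $3$) renders these orbit sums transparent, since $a$ cyclically permutes the coordinates within each coset while the involution $b$ interchanges the two cosets.

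Matching $v$ and $w$ on all listed orbit sums is then a finite system of polynomial equations in the twelve coordinates, and I would produce a rational solution with $\mathrm{S}_3\cdot v\neq\mathrm{S}_3\cdot w$, certifying the inequality of orbits by exhibiting one degree $4$ orbit sum that separates them (this simultaneously reconfirms that separation is attained in degree $4$). I expect the main obstacle to be precisely the search for such a solution: one must steer away from solutions that are forced to lie in the same orbit, and a point supported on a single $\langle a\rangle$-coset is useless, since there the degree $3$ invariants already reduce to symmetric and cyclic functions of three coordinates that determine the orbit. I would therefore seek $v,w$ genuinely supported on both cosets and normalize $\sum_g x_g=0$ to shrink the computation; once a candidate pair is found, checking the finitely many equalities and the single separating inequality is a routine evaluation.
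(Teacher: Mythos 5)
Your upper bound is the same as the paper's: $\sepbeta^\QQ(\mathrm{S}_3)\le\beta^\QQ(\mathrm{S}_3)=4$ via \eqref{eq:sepbeta<beta} and the known Noether number of $\mathrm{S}_3$. The problem is the lower bound, which is the entire substance of the proposition. What you offer there is a search plan, not a proof: you enumerate the types of orbit sums of degree at most $3$ on $\QQ[x_g\mid g\in\mathrm{S}_3]$ and then say you ``would produce'' a rational pair $v,w$ in distinct orbits matching on all of them, explicitly flagging the search for such a pair as the main obstacle and leaving it unperformed. Until a concrete pair is exhibited and verified, nothing is proved; indeed it is not even argued that such a pair must exist in the regular representation, so the proposal as written could in principle terminate in failure.

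The idea you are missing, which is what makes the paper's proof a two-line verification rather than a computer search, is a reduction to a small direct summand. The regular representation of $\mathrm{S}_3$ over $\QQ$ contains as a direct summand $W$, the sum of the $3$-dimensional permutation representation and the sign representation; since points of $W$ embed into the regular representation (zero on a complement) and invariants restrict without degree increase, it suffices to break separation on $W$. On $W$, with coordinates $x_1,x_2,x_3,y$, the invariant ring is classically known: it is minimally generated by $s_1,s_2,s_3$ (elementary symmetric polynomials), $y^2$, and the single degree-$4$ generator $a:=y(x_2-x_1)(x_3-x_2)(x_3-x_1)$. So every invariant of degree at most $3$ is a polynomial in $s_1,s_2,s_3,y^2$, and the pair $v:=[2,1,0,1]^T$, $w:=[2,1,0,-1]^T$ (same $x$-coordinates, opposite sign of $y$) agrees on all of them, while $a(v)=-2\neq 2=a(w)$ certifies that the orbits are distinct. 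The structural point is that the obstruction sits in the sign-isotypic component: the only generators involving $y$ are $y^2$ and the degree-$4$ element $a$, so flipping the sign of $y$ produces the desired pair with no search at all. Your heuristics about cosets of $\langle a\rangle$ and normalizing $\sum_g x_g=0$ do not substitute for this; to complete your route you would still have to write down an explicit pair (for instance, the image of the above $v,w$ under the embedding $W\hookrightarrow\mathcal{F}(\mathrm{S}_3,\QQ)$) and check the finitely many equalities.
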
 

\begin{proof} 
The regular representation of $\mathrm{S}_3$ over $\QQ$ contains as a direct summand 
the sum of the standard $3$-dimensional permutation representation of  $\mathrm{S}_3$ 
and the sign representation of  $\mathrm{S}_3$. 
Therefore it is sufficient to show that the invariants of degree  at most $3$ do not separate all the orbits in this $4$-dimensional representation. 
The corresponding coordinate ring is $\QQ[x_1,x_2,x_3,y]$, a permutation $g\in \mathrm{S}_3$ maps $x_i$ to $x_{g(i)}$, and $g\cdot y=\mathrm{sign}(g)y$. 
It is well known and easy to see that 
$\QQ[x_1,x_2,x_3,y]^{\mathrm{S}_3}$ is minimally generated by 
$s_1:=x_1+x_2+x_3$, $x_2:=x_1x_2+x_2x_3+x_1x_3$, $x_1x_2x_3$, $y^2$, 
and $a:=y(x_2-x_1)(x_3-x_2)(x_3-x_1)$. 
Set $v:=[2,1,0,1]^T\in \QQ^4$, $w:= [2,1,0,-1]^T\in \QQ^4$. 
Since $v$ and $w$ have the same $x$-coordinates, $s_i(v)=s_i(w)$ for $i=1,2,3$. 
Moreover, since $y(v)=-y(w)$, we have $y^2(v)=y^2(w)$. So the invariants of degree at most $3$ agree 
on $v$ and $w$. 
However, $v$ and $w$ have different $G$-orbits, because $a(v)\neq a(w)$. This shows 
$\sepbeta^\QQ(\mathrm{S}_3)\ge 4$. 
The reverse inequality holds by \eqref{eq:sepbeta<beta}, because $\sepbeta^\QQ(\mathrm{S}_3)=4$ (this was shown first in 
\cite{schmid}; see \cite{cziszter-domokos-szollosi} for more references).    
\end{proof} 
 
\section{Representations of abelian groups over non-closed base fields}\label{sec:non-closed}

Throughout this section $G$ is an arbitrary finite abelian group. 
Let $\field=\FF(\omega)$ be a cyclotomic extension of $\FF$ by an element $\omega$ whose multiplicative order is 
$\mathrm{exp}(G)$, the least common multiple of the orders of elements of $G$. 
Note that the existence of $\field$ forces that $\mathrm{char}(\FF)$ does not divide $|G|$. 
Moreover, $\field$ is a Galois extension of $\FF$, denote by $\Gamma$ the automorphism group of the field extension $\field$ of   
$\FF$. 
Write $\widehat G$ for the group of characters of $G$ over $\field$; 
thus we have $\widehat G\cong G$ as groups. 
The Galois group $\Gamma$ acts naturally on $\widehat G$ and $\mathcal{F}(G,\field)$, namely 
for $\gamma\in \Gamma$ and $\chi\in \widehat G$,   
$\gamma\cdot \chi$ is the character of $G$ mapping $g\in G$ to $\gamma(\chi(g))$, and for 
$f\in\mathcal{F}(G,\field)$, $\gamma(f)$ is the function on $G$ mapping $g\in G$ to $\gamma(f(g))$. 
The $\FF$-subspace $\mathcal{F}(G,\FF)$ of $\mathcal{F}(G,\field)$ is the fixed point set of $\Gamma$ in 
$\mathcal{F}(G,\field)$. 
Denote by $\widehat G/\Gamma$ a chosen set of representatives of the $\Gamma$-orbits in $\widehat G$. 
The irreducible representations of $G$ over $\field$ are all $1$-dimensional.  For 
$\chi\in \widehat G$ denote by $(\field,\chi)$ the $1$-dimensional $\field$-vector space  $\field$ on which 
$G$ acts via the character $\chi$: for $g\in G$ and $\lambda\in \field$ we have 
$g\cdot \lambda=\chi(g)\lambda$. The irreducible representations of $G$ over $\FF$ are in a natural bijection with 
$\widehat G/\Gamma$, as follows. 

For each $\chi\in\widehat G/\Gamma$ we construct a representation of $G$ over $\FF$. 
The image $\chi(G)$ of $G$ under $\chi$ is a subgroup of the multiplicative group 
$\field^\times$ of $\field$, hence is cyclic. Its order $d_\chi$ is a divisor of $\mathrm{exp}(G)$. 
We 
choose an element $g_\chi\in G$ such that $\omega_\chi:=\chi(g_\chi)$ generates the subgroup $\chi(G)$ 
of $\field^\times$ (i.e. the multiplicative order of $\omega_\chi$ is $d_\chi$). 
The set of roots of 
the minimal polynomial $m^{\omega_\chi}_\FF$ of $\omega_\chi$ over $\FF$ is the 
$\Gamma$-orbit of $\omega_\chi$, whence $\ell_\chi:=\deg(m^{\omega_\chi}_\FF)=|\Gamma\cdot\omega_\chi|$. 
Let $A_\chi\in \FF^{\ell_\chi\times\ell_\chi}$ be the companion matrix of $m^{\omega_\chi}_\FF$, 
so the characteristic polynomial (and the minimal polynomial) of $A_\chi$ is $m^{\omega_\chi}_\FF$. 
Then $A_\chi^{d_\chi}$ is the $\ell_\chi\times\ell_\chi$ identity matrix, and the set of eigenvalues of 
$A_\chi$ is the $\Gamma$-orbit of $\omega_\chi$. 
Lift to $G$ the representation of $G/\ker(\chi)\cong \chi(G)$   given by 
$g_\chi^j\ker(\chi)\mapsto A_\chi^j$ for $j=0,1,\dots,d-1$. 
Denote by $\rho_\chi:G\to \mathrm{GL}(\FF^{\ell_\chi})$ the corresponding group homomorphism, 
so $(\FF^{\ell_\chi},\rho_\chi)$ is a representation of $G$ over $\FF$.  
Note that $\ker(\rho_\chi)=\ker(\chi)$ (because $\rho_\chi(g_\chi)=A_\chi$ has eigenvalues with multiplicative order $d_\chi=|G/\ker(\chi)|$). 
Moreover, we denote by $(\field^{\ell_\chi},\rho_\chi)$ the representation 
obtained from $(\FF^{\ell_\chi},\rho_\chi)$ by extending the scalars from $\FF$ to $\field$. 
The following statement must be well known, but we do not know a reference for it. 

\begin{proposition} \label{prop:irrep abelian} 
\begin{itemize}
\item[(i)] The representation $(\field^{\ell_\chi},\rho_\chi)$ is isomorphic to 
$\bigoplus_{\psi\in \Gamma\cdot\chi} (\field,\psi)$, where $\Gamma\cdot \chi$ is the 
$\Gamma$-orbit of $\chi$. 
\item[(ii)] The representation $(\FF^{\ell_\chi},\rho_\chi)$ is irreducible for all $\chi\in \widehat G/\Gamma$. 
\item[(iii)] The regular representation $(\mathcal{F}(G,\FF),\rho_{\mathrm{reg}})$ of $G$ over $\FF$ 
is isomorphic to the direct sum 
$\bigoplus_{\chi\in \widehat G/\Gamma}(\FF^{\ell_\chi},\rho_\chi)$,  
where $\widehat G/\Gamma$ stands for a complete list of representatives of the $\Gamma$-orbits in $\widehat G$. 
\item[(iv)]  $\{(\FF^{\ell_\chi},\rho_\chi)\mid \chi\in \widehat G/\Gamma\}$  is a complete irredundant list of 
representatives of the isomorphism classes of the irreducible representations of $G$ over $\FF$. 
\end{itemize}  
\end{proposition}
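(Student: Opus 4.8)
The plan is to treat the four parts in order, using (i) and (ii) as the analytic core and then deducing (iii) and (iv) formally from them together with the semisimplicity of $\FF G$ (Maschke's theorem applies since $\mathrm{char}(\FF)\nmid|G|$) and a Galois-descent argument. For (i) I would pass to $\field$ and analyse the companion matrix $A_\chi=\rho_\chi(g_\chi)$. Its minimal polynomial $m^{\omega_\chi}_\FF$ is separable with the $\ell_\chi$ distinct roots $\{\gamma(\omega_\chi)\mid\gamma\in\Gamma\}$, so over $\field$ the matrix $A_\chi$ is diagonalizable with one-dimensional eigenspaces. On the eigenspace for the eigenvalue $\gamma(\omega_\chi)$ the generator $g_\chi$ acts by $\gamma(\omega_\chi)$; since $\rho_\chi$ factors through the cyclic group $G/\ker(\chi)$ generated by $g_\chi\ker(\chi)$, any $g\in G$ with $g\ker(\chi)=g_\chi^j\ker(\chi)$ then acts on this eigenspace by $\gamma(\omega_\chi)^j=\gamma(\chi(g))=(\gamma\cdot\chi)(g)$, so the eigenline is $G$-isomorphic to $(\field,\gamma\cdot\chi)$. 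The key point is that $\gamma\mapsto\gamma(\omega_\chi)$ and $\gamma\mapsto\gamma\cdot\chi$ have the same fibres, both being determined by the value on $g_\chi$; hence the eigenlines are in bijection with the distinct characters of $\Gamma\cdot\chi$, giving $(\field^{\ell_\chi},\rho_\chi)\cong\bigoplus_{\psi\in\Gamma\cdot\chi}(\field,\psi)$ and, as a byproduct, $|\Gamma\cdot\chi|=\ell_\chi$.

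For (ii) the decisive observation is that $\rho_\chi(G)$ is generated by the single matrix $A_\chi=\rho_\chi(g_\chi)$, so an $\FF$-subspace of $\FF^{\ell_\chi}$ is $G$-invariant precisely when it is invariant under $A_\chi$, i.e. when it is an $\FF[A_\chi]$-submodule. Because $m^{\omega_\chi}_\FF$ is irreducible over $\FF$ of degree $\ell_\chi$, the algebra $\FF[A_\chi]\cong\FF[x]/(m^{\omega_\chi}_\FF)$ is a field, over which $\FF^{\ell_\chi}$ is one-dimensional; it therefore has no nonzero proper submodule, so $(\FF^{\ell_\chi},\rho_\chi)$ is irreducible.

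For (iii) I would begin with the standard splitting-field decomposition $\mathcal{F}(G,\field)\cong\bigoplus_{\chi\in\widehat G}(\field,\chi)$ of the regular representation over $\field$ (each irreducible $\field G$-module is one-dimensional and occurs once), regroup the characters into $\Gamma$-orbits, and apply (i) to obtain $\mathcal{F}(G,\field)\cong\bigoplus_{\chi\in\widehat G/\Gamma}(\field^{\ell_\chi},\rho_\chi)$. Since $\mathcal{F}(G,\field)=\field\otimes_\FF\mathcal{F}(G,\FF)$ and $(\field^{\ell_\chi},\rho_\chi)=\field\otimes_\FF(\FF^{\ell_\chi},\rho_\chi)$ are both extensions of scalars, this exhibits an isomorphism $\field\otimes_\FF\mathcal{F}(G,\FF)\cong\field\otimes_\FF\bigl(\bigoplus_{\chi\in\widehat G/\Gamma}(\FF^{\ell_\chi},\rho_\chi)\bigr)$ of $\field G$-modules, and it remains to descend it to $\FF$. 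I expect this descent to be the main obstacle. It is supplied by the Noether--Deuring principle: for finite-dimensional $\FF G$-modules $M,N$ flat base change gives $\mathrm{Hom}_{\FF G}(M,N)\otimes_\FF\field\cong\mathrm{Hom}_{\field G}(\field\otimes_\FF M,\field\otimes_\FF N)$, so an isomorphism after extension of scalars forces $\mathrm{Hom}_{\FF G}(M,N)\neq 0$, and a Krull--Schmidt comparison of the indecomposable summands then yields $M\cong N$ over $\FF$. Applying this with $M=\mathcal{F}(G,\FF)$ and $N=\bigoplus_{\chi\in\widehat G/\Gamma}(\FF^{\ell_\chi},\rho_\chi)$ gives (iii).

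Finally, for (iv), irreducibility of each $(\FF^{\ell_\chi},\rho_\chi)$ is exactly (ii). The modules are pairwise non-isomorphic because an $\FF G$-isomorphism $\rho_\chi\cong\rho_{\chi'}$ would, after tensoring with $\field$ and invoking (i), give $\bigoplus_{\psi\in\Gamma\cdot\chi}(\field,\psi)\cong\bigoplus_{\psi\in\Gamma\cdot\chi'}(\field,\psi)$; the multiset of one-dimensional constituents is an isomorphism invariant, so $\Gamma\cdot\chi=\Gamma\cdot\chi'$, i.e. $\chi=\chi'$ in $\widehat G/\Gamma$. Completeness follows from semisimplicity: every simple $\FF G$-module occurs as a direct summand of the regular module $\mathcal{F}(G,\FF)$, and by (iii) the summands of $\mathcal{F}(G,\FF)$ are exactly the $(\FF^{\ell_\chi},\rho_\chi)$; hence this family is a complete irredundant list of the irreducible representations of $G$ over $\FF$.
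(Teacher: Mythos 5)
Your proof is correct, and parts (i), (iii) and (iv) follow essentially the same route as the paper: decompose $\field^{\ell_\chi}$ into eigenlines of $A_\chi$ and identify the character acting on each line by its value at $g_\chi$; regroup the character decomposition of $\mathcal{F}(G,\field)$ into $\Gamma$-orbits and descend to $\FF$; then get irredundancy from (i) and completeness from semisimplicity plus (iii). The genuine difference is in (ii). The paper proves irreducibility by Galois descent: it extends a nonzero invariant subspace $V\subseteq\FF^{\ell_\chi}$ to $\field\otimes_\FF V$, picks an $A_\chi$-eigenvector inside it, and applies $\Gamma$ (formula \eqref{eq:gamma v}) to conclude that $\field\otimes_\FF V$ contains \emph{all} eigenvectors and hence is everything. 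You instead observe that $\rho_\chi(G)$ is generated by the single matrix $A_\chi$, so the $G$-invariant $\FF$-subspaces are exactly the $\FF[A_\chi]$-submodules, and since $m^{\omega_\chi}_\FF$ is irreducible of degree $\ell_\chi$, the algebra $\FF[A_\chi]\cong\FF[x]/(m^{\omega_\chi}_\FF)$ is a field over which $\FF^{\ell_\chi}$ is one-dimensional. This is more elementary (no base change, no Galois action needed for this part) and arguably cleaner in isolation; what the paper's choice buys is the eigenvector formula \eqref{eq:gamma v}, which it reuses immediately afterwards in the proof of Lemma~\ref{lemma:f_psi}, so the Galois machinery is not wasted there. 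Finally, in (iii) the paper simply asserts that two $\FF G$-modules which become isomorphic after extending scalars to $\field$ are already isomorphic over $\FF$; you correctly identify this step as the Noether--Deuring principle and sketch its justification (comparison of $\mathrm{Hom}$ spaces under flat base change together with Krull--Schmidt), thereby filling in a detail the paper leaves implicit.
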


\begin{proof} 
(i): Since $A_\chi\in \field^{\ell_\chi\times\ell_\chi}$ has $\ell_\chi$ distinct eigenvalues in $\field$, and the set of eigenvalues 
is the $\Gamma$-orbit of $\omega_\chi$, the space $\field^{\ell_\chi}$ decomposes as the direct sum of $1$-dimensional eigenspaces 
$\field^{\ell_\chi}=\bigoplus_{\omega\in \Gamma\cdot \omega_\chi}\field v_\omega$, 
where $A_\chi v_\omega=\omega v_\omega$, implying that 
$g_\chi \cdot v_\omega =\rho_\chi(g_\chi)v_\omega=A_\chi v_\omega=\omega v_\omega$.  
Moreover, $\ker(\chi)=\ker(\rho_\chi)$, and since the coset of $g_{\chi}$ generates $G/\ker(\rho_\chi)$, 
we conclude that the subspace $\field v_\omega$ is $G$-invariant. 
Consequently, $(\field v_\omega,\rho_\chi\vert_{\field v_\omega})\cong (\field,\psi)$ for some 
$\psi\in \widehat G$. We claim that $\psi=\gamma\cdot \chi$, where $\gamma\in \Gamma$ satisfies 
$\omega=\gamma(\omega_\chi)$. 
Indeed, we have 
\[(\gamma\cdot\chi)(g_\chi)v_\omega=\gamma(\chi(g_\chi))v_\omega
=\gamma(\omega_\chi)v_\omega
=\omega v_\omega=A_\chi v_\omega=g_\chi\cdot v_\omega=\psi(g_\chi)v_\omega,\] 
showing that the characters $\gamma\cdot \chi$ and $\psi$ agree on $g_\chi$. 
Since  $\ker(\psi)\supseteq \ker(\rho_\chi)=\ker(\chi)$,  
$\ker(\gamma\cdot \chi)=\ker(\chi)$, and the coset of $g_\chi$ generates $G/\ker(\chi)$, 
we conclude that $\gamma\cdot \chi=\psi$.  
This shows that $(\field v_\omega,\rho_\chi\vert_{\field v_\omega})\cong (\field,\gamma\cdot\chi)$, 
and thus (i) holds.

(ii):  Let $V$ be a non-zero $G$-invariant subspace in $\FF^{\ell_\chi}$ (where $G$ acts on $\FF^{\ell_\chi}$ via $\rho_\chi$). 
The non-zero $A_\chi$-invariant subspace $\field\otimes_\FF V$  of $\field^{\ell_\chi}$ 
contains a non-zero $A_\chi$-eigenvector $v$. Its  
eigenvalue $\omega$ belongs to $\Gamma\cdot \omega_\chi$. 
The action of $\Gamma$ on $\field$ induces an action on 
$\field^{\ell_\chi}$ and on $\field^{\ell_\chi\times \ell_\chi}$ in the obvious way. The matrix $A_\chi$ is fixed by $\Gamma$, because its entries 
belong to $\FF$. For $\gamma\in\Gamma$ we have 
\begin{equation}\label{eq:gamma v}
A_\chi\gamma(v)=\gamma(A_\chi)\gamma(v)=
\gamma(A_\chi v)=
\gamma(\omega v)=\gamma(\omega)\gamma(v). 
\end{equation}   
Since $V$ is defined over $\FF$, 
$\Gamma$ maps $\field\otimes_\FF V$ into itself, so $\gamma(v)$ belongs to 
$\field\otimes_\FF V$, and it is an $A_\chi$-eigenvector with eigenvalue $\gamma(\omega)$. 
This holds for all $\gamma\in \Gamma$, so $\field\otimes_\FF V$ contains all the $A_\chi$-eigenvectors in 
$\field^{\ell_\chi}$.  
Therefore 
$\field\otimes_\FF V=\field^{\ell_\chi}$, and consequently, $V=\FF^{\ell_\chi}$.   
Thus $\FF^{\ell_\chi}$ has no proper non-zero $G$-invariant subspace, 
so the representation $(\FF^{\ell_\chi},\rho_\chi)$ is irreducible, i.e. (ii) holds. 

(iii)  By (i) we see that 
$\bigoplus_{\chi\in \widehat G/\Gamma}(\field^{\ell_\chi},\rho_\chi)
\cong \bigoplus_{\psi\in \widehat G}(\field,\psi)\cong (\mathcal{F}(G,\field),\rho_{\mathrm{reg}})$. 
So extending the base field from $\FF$ to $\field$ the representations 
$(\mathcal{F}(G,\FF),\rho_{\mathrm{reg}})$ and $\bigoplus_{\chi\in \widehat G/\Gamma}(\FF^{\ell_\chi},\rho_\chi)$ become isomorphic. 
It follows that these two representations are isomorphic over $\FF$. 

(iv) Statement (i) implies that for $\chi\neq \chi'$ in $\widehat G/\Gamma$, the representations 
$(\FF^{\ell_\chi},\rho_\chi)$ and $(\FF^{\ell_{\chi'}},\rho_{\chi'})$ are non-isomorphic, so the given list of irreducible representations is irredundant. 
On the other hand, 
every irreducible representation of $G$ over $\FF$ occurs as a direct summand 
in the regular representation, hence (iii) implies completeness of the list. 
 \end{proof} 

By Proposition~\ref{prop:irrep abelian} (iii) we have a direct sum decomposition 
\begin{equation}\label{eq:bigoplus V_chi}
\mathcal{F}(G,\FF)=\bigoplus_{\chi\in \widehat G/\Gamma}V_\chi 
\end{equation}  
into minimal $G$-invariant subspaces, where 
$(V_\chi,\rho_{\mathrm{reg}}\vert_{V_\chi})\cong (\FF^{\ell_\chi},\rho_\chi)$. 
Extending scalars from $\FF$ to $\field$ we get 
\begin{equation}\label{eq:bigoplus K tensor V_chi} 
\mathcal{F}(G,\field)=\bigoplus_{\chi\in \widehat G/\Gamma}\field\otimes_\FF V_\chi.
\end{equation} 

We need the following refinement of Proposition~\ref{prop:irrep abelian} (i): 

\begin{lemma}\label{lemma:f_psi}
$\field\otimes_\FF V_\chi$ has a $\field$-vector space basis 
$\{f_\psi\mid \psi\in \Gamma\cdot \chi\}$, where 
\begin{itemize}
\item[(i)] $\field f_\psi$ is a $G$-invariant subspace of $\field\otimes_\FF V_\chi$,  
on which the representation of $G$ is isomorphic to 
$(\field,\psi)$; 
\item[(ii)] 
$\{f_\psi\mid \psi\in \Gamma\cdot \chi\}$ is the $\Gamma$-orbit of 
$f_\chi$, so 
\begin{equation}\label{eq:gamma f} f_{\gamma\cdot \psi}=\gamma(f_\psi)\text{ for all }
\gamma\in \Gamma,\  
\psi\in \Gamma\cdot\chi.
\end{equation}  
\end{itemize} 
\end{lemma}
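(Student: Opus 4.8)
The plan is to build the basis $\{f_\psi\}$ out of the single eigenline computation already contained in Proposition~\ref{prop:irrep abelian}(i), and to engineer the Galois-equivariance (ii) by transporting one carefully chosen vector around its $\Gamma$-orbit. First I would fix an isomorphism $(V_\chi,\rho_{\mathrm{reg}}\vert_{V_\chi})\cong(\FF^{\ell_\chi},\rho_\chi)$ of $\FF G$-modules and extend scalars to identify $\field\otimes_\FF V_\chi\cong\field^{\ell_\chi}$ as $\field G$-modules; since this identification is defined over $\FF$, it intertwines the $\Gamma$-action on $\field\otimes_\FF V_\chi$ (through the left tensor factor, with fixed points $V_\chi$) with the coordinatewise $\Gamma$-action on $\field^{\ell_\chi}$ (fixed points $\FF^{\ell_\chi}$). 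As recorded in the proof of Proposition~\ref{prop:irrep abelian}(i), $\field^{\ell_\chi}=\bigoplus_{\omega\in\Gamma\cdot\omega_\chi}\field v_\omega$ with $A_\chi v_\omega=\omega v_\omega$, the line $\field v_\omega$ carries the character $\gamma\cdot\chi$ whenever $\omega=\gamma(\omega_\chi)$, and by \eqref{eq:gamma v} one has $A_\chi\gamma(v)=\gamma(\omega)\gamma(v)$, so $\gamma$ maps the $\omega$-eigenline onto the $\gamma(\omega)$-eigenline. I also record that the $G$-action commutes with the $\Gamma$-action: $g\cdot\gamma(v)=\rho_\chi(g)\gamma(v)=\gamma(\rho_\chi(g)v)=\gamma(g\cdot v)$, because $\rho_\chi(g)$ is a power of $A_\chi$ and hence has entries in $\FF$.

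Granting these facts, the construction is short. I would pick a nonzero $f_\chi$ spanning the $\omega_\chi$-eigenline and \emph{define} $f_\psi:=\gamma(f_\chi)$ whenever $\psi=\gamma\cdot\chi$. Then (ii) is immediate, since $\gamma(\delta(f_\chi))=(\gamma\delta)(f_\chi)$ gives $f_{\gamma\cdot\psi}=\gamma(f_\psi)$, i.e.\ \eqref{eq:gamma f}. Property (i) follows from the commuting actions: for $\psi=\gamma\cdot\chi$ and $g\in G$,
\[g\cdot f_\psi=g\cdot\gamma(f_\chi)=\gamma(g\cdot f_\chi)=\gamma(\chi(g)f_\chi)=\gamma(\chi(g))\,\gamma(f_\chi)=(\gamma\cdot\chi)(g)\,f_\psi=\psi(g)f_\psi,\]
so $\field f_\psi$ is $G$-invariant and affords $\psi$. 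Finally the $f_\psi$ are eigenvectors of $A_\chi$ for the pairwise distinct eigenvalues $\psi(g_\chi)=\gamma(\omega_\chi)$, hence are linearly independent; as there are $|\Gamma\cdot\chi|=\ell_\chi$ of them, they form a $\field$-basis.

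The one genuine obstacle is the \emph{well-definedness} of $f_\psi:=\gamma(f_\chi)$: it must not depend on the choice of $\gamma$ with $\gamma\cdot\chi=\psi$. Two such choices differ by an element of the stabilizer $\Gamma_\chi:=\{\sigma\in\Gamma\mid\sigma\cdot\chi=\chi\}=\mathrm{Gal}(\field/\FF(\omega_\chi))$ (the equality holds because $\sigma\cdot\chi=\chi$ is equivalent to $\sigma(\omega_\chi)=\omega_\chi$, as $\chi(G)=\langle\omega_\chi\rangle$), so well-definedness is precisely the requirement that $\sigma(f_\chi)=f_\chi$ for every $\sigma\in\Gamma_\chi$. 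I would secure this by observing that the eigenspace $\ker(A_\chi-\omega_\chi I)$ is cut out by a matrix with entries in $\FF(\omega_\chi)$, hence is a $\field$-line already defined over $\FF(\omega_\chi)$; choosing $f_\chi$ to be a nonzero vector with coordinates in $\FF(\omega_\chi)$ (for the standard companion matrix one may take $f_\chi=(1,\omega_\chi,\dots,\omega_\chi^{\ell_\chi-1})^{T}$), every $\sigma\in\Gamma_\chi$ fixes $f_\chi$ coordinatewise, since $\Gamma_\chi$ fixes $\FF(\omega_\chi)$ pointwise. (Alternatively, for an arbitrary spanning vector $f_\chi$ the relations $\sigma(f_\chi)=c_\sigma f_\chi$ define a $1$-cocycle $\sigma\mapsto c_\sigma\in\field^\times$ on $\Gamma_\chi$, and Hilbert's Theorem~90, $H^1(\Gamma_\chi,\field^\times)=1$, lets one rescale $f_\chi$ to a $\Gamma_\chi$-fixed vector.) With such an $f_\chi$ the map $\psi\mapsto f_\psi$ is well-defined, and the construction above completes the proof.
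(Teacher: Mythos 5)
Your proposal is correct and takes essentially the same route as the paper: both arguments pick a generator $f_\chi$ of the $\chi$-eigenline that is fixed by the stabilizer $\Gamma_0$ of $\chi$ in $\Gamma$ and then define $f_\psi:=\gamma(f_\chi)$, transporting it around the $\Gamma$-orbit; your choice of an eigenvector with coordinates in $\FF(\omega_\chi)=\field^{\Gamma_0}$ is the same normalization idea as the paper's choice of an $f_\chi$ with a non-zero coordinate in $\field^{\Gamma_0}$. Your explicit discussion of well-definedness (two choices of $\gamma$ differ by an element of $\Gamma_0$, which fixes $f_\chi$) is exactly the point the paper makes by showing that the stabilizer of $f_\chi$ in $\Gamma$ equals $\Gamma_0$.
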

\begin{proof}
By Proposition~\ref{prop:irrep abelian} (i), $\field\otimes_\FF V_\chi$ contains a unique 
$1$-dimensional $G$-invariant subspace $\field v$ such that 
$(\field v,\rho_{\chi}\vert_{\field v})\cong (\field,\chi)$ (clearly $\field v$ is the $\omega_\chi$-eigenspace of 
$\rho_\chi(g_\chi)=A_\chi$). 
Denote by $\Gamma_0$ the stabilizer subgroup in $\Gamma$ of $\chi$, or equivalently, $\Gamma_0$ is the stabilizer in $\Gamma$ 
of $\omega_\chi$  
(so $\ell_\chi=|\Gamma\cdot \chi|=|\Gamma:\Gamma_0|$, the index of $\Gamma_0$ in $\Gamma$). 
Formula \eqref{eq:gamma v} shows that the stabilizer of $v$ in $\Gamma$ (and similarly, of any non-zero element of $\field v$) 
is contained in $\Gamma_0$. 
Pick an element $f_\chi$ in $\field v$ such that $f_\chi$ has a non-zero coordinate in the fixed field 
$\field^{\Gamma_0}$ of $\Gamma_0$ in $\field$. 
Then for any $\gamma\in \Gamma_0$, one non-zero coordinate of $f_\chi$ coincides with the corresponding coordinate of 
$\gamma(f_\chi)$. On the other hand,   \eqref{eq:gamma v} implies that for any $\gamma\in \Gamma_0$, 
$\gamma(f_\chi)$ is also an $\omega_\chi$-eigenvector of $A_\chi$, hence 
$\gamma(f_\chi)\in \field v=\field f_\chi$. It follows that $\gamma(f_\chi)=f_\chi$, so 
the stabilizer of $f_\chi$ in $\Gamma$ equals $\Gamma_0$, the stabilizer of $\chi$ in $\Gamma$. 
Now for $\psi\in \Gamma\cdot \chi$ we may define  $f_\psi$ as follows: take any $\gamma\in \Gamma$ with $\gamma\cdot \chi=\psi$, 
and set $f_\psi:=\gamma(f_\chi)$. By \eqref{eq:gamma v} we see that $f_\psi$ is a $\gamma(\omega_\chi)$-eigenvector of 
$\rho_\chi(g_\chi)$.  
Thus $\{f_\psi\mid \psi\in \Gamma\cdot \chi\}$ is a $\field$-vector space basis of $\field\otimes_\FF V_\chi$ 
satisfying (i) and (ii).  
\end{proof} 

Choose an $\FF$-vector space basis $\{e_\psi\mid \psi\in \Gamma\cdot \chi\}$ in 
$V_\chi$ (recall that $\dim(V_\chi)=\ell_\chi=|\Gamma\cdot \chi|$, so we can use 
the elements in the $\Gamma$-orbit of $\chi$ as labels for the basis vectors). 
Then by \eqref{eq:bigoplus V_chi}, 
$\{e_\psi\mid \psi\in \widehat G\}$ is an $\FF$-vector space basis of $\mathcal{F}(G,\FF)$, and 
also a $\field$-vector space basis of $\mathcal{F}(G,\field)$. 
By Lemma~\ref{lemma:f_psi} and \eqref{eq:bigoplus K tensor V_chi}, another $\field$-vector space basis of 
$\mathcal{F}(G,\field)$ is 
$\{f_\psi\mid \psi\in \widehat G\}$.  
The following statement identifies the $\FF$-subspace $\mathcal{F}(G,\FF)$ of 
$\mathcal{F}(G,\field)$ in terms of the coordinates with respect to the 
$\field$-vector space basis 
$\{f_\psi\mid \psi\in \widehat G\}$:  

\begin{lemma} \label{lemma:F(G,F)} 
We have 
\[\mathcal{F}(G,\FF)=
\{\sum_{\psi\in\widehat G} c_\psi f_\psi\in \mathcal{F}(G,\field)\mid c_{\gamma\psi}=\gamma(c_\psi)\in \field\text{ for all }
\gamma\in\Gamma, \ \psi\in\widehat  G\}.\] 
\end{lemma}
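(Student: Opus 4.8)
The plan is to use the fact, recalled just before the statement, that $\mathcal{F}(G,\FF)$ is precisely the fixed point set $\mathcal{F}(G,\field)^\Gamma$ of the Galois group $\Gamma$ acting on $\mathcal{F}(G,\field)$. Thus the assertion amounts to identifying the $\Gamma$-fixed vectors in terms of their coordinates with respect to the $\field$-vector space basis $\{f_\psi\mid \psi\in\widehat G\}$.

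First I would record that the $\Gamma$-action on $\mathcal{F}(G,\field)$ is $\field$-semilinear: for $\lambda\in\field$ and $u\in\mathcal{F}(G,\field)$, applying $\gamma$ pointwise to the values of the function $\lambda u$ gives $\gamma(\lambda u)=\gamma(\lambda)\gamma(u)$, since $\gamma$ is a field automorphism. This is the crucial point, because the Galois action twists both the scalar coefficients and the basis vectors simultaneously; it is precisely what forces the compatibility condition $c_{\gamma\psi}=\gamma(c_\psi)$ rather than mere $\Gamma$-invariance of the coefficients.

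Next, combining this semilinearity with the identity $\gamma(f_\psi)=f_{\gamma\cdot\psi}$ from Lemma~\ref{lemma:f_psi}(ii) (which holds for every $\psi\in\widehat G$, since $\gamma$ preserves each $\Gamma$-orbit $\Gamma\cdot\chi$ and hence each summand $\field\otimes_\FF V_\chi$ of \eqref{eq:bigoplus K tensor V_chi}), I would compute for $v=\sum_{\psi\in\widehat G}c_\psi f_\psi$ that
\[\gamma(v)=\sum_{\psi\in\widehat G}\gamma(c_\psi)\,f_{\gamma\cdot\psi}=\sum_{\psi\in\widehat G}\gamma(c_{\gamma^{-1}\cdot\psi})\,f_\psi,\]
the second equality being obtained by the reindexing $\psi\mapsto\gamma^{-1}\cdot\psi$ of the permutation of $\widehat G$ induced by $\gamma$. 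Since $\{f_\psi\mid\psi\in\widehat G\}$ is a $\field$-basis, the condition $\gamma(v)=v$ is equivalent to $\gamma(c_{\gamma^{-1}\cdot\psi})=c_\psi$ for all $\psi$, that is, replacing $\psi$ by $\gamma\cdot\psi$, to $c_{\gamma\cdot\psi}=\gamma(c_\psi)$ for all $\psi\in\widehat G$. Imposing this for every $\gamma\in\Gamma$ yields exactly the description of $\mathcal{F}(G,\field)^\Gamma=\mathcal{F}(G,\FF)$ claimed in the statement.

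The argument is a direct computation, so there is no serious obstacle; the only point demanding care is the bookkeeping of the semilinear action, namely keeping track of the fact that $\gamma$ acts at once on the scalars $c_\psi$ and on the basis vectors $f_\psi$, together with the correct reindexing under the permutation $\psi\mapsto\gamma\cdot\psi$ of $\widehat G$.
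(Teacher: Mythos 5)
Your proposal is correct and follows essentially the same route as the paper: identify $\mathcal{F}(G,\FF)$ with the $\Gamma$-fixed points, use semilinearity of the Galois action together with $\gamma(f_\psi)=f_{\gamma\cdot\psi}$ from Lemma~\ref{lemma:f_psi}(ii), and compare coefficients in the basis $\{f_\psi\}$. Your explicit reindexing $\psi\mapsto\gamma^{-1}\cdot\psi$ is merely a spelled-out version of the coefficient comparison the paper leaves implicit.
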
 

\begin{proof} 
Recall that $\Gamma$ acts on $\mathcal{F}(G,\field)$ as $(\gamma(f))(g)=\gamma(f(g))$ for 
$\gamma\in \Gamma$, $f\in \mathcal{F}(G,\field)$ and $g\in G$. This is an action on the $\field$-vector space $\mathcal{F}(G,\field)$ 
via semilinear transformations. That is,  
for $f_1,f_2\in \mathcal{F}(G,\field)$, $c_1,c_2\in \field$, and $\gamma\in \Gamma$ we have 
$\gamma(c_1f_1+c_2f_2)=\gamma(c_1)\gamma(f_1)+\gamma(c_2)\gamma(f_2)$. 
Therefore we have 
\[\gamma(\sum_{\psi\in\widehat G} c_\psi f_\psi)=\sum_{\psi\in\widehat G} \gamma(c_\psi) \gamma(f_\psi)=
\sum_{\psi\in\widehat G} \gamma(c_\psi) f_{\gamma\cdot \psi}\] 
(the second equality above holds by \eqref{eq:gamma f} in Lemma~\ref{lemma:f_psi}), 
implying that 
\begin{equation}\label{eq:gamma(f)=f} 
\gamma(\sum_{\psi\in\widehat G} c_\psi f_\psi)=\sum_{\psi\in\widehat G} c_\psi f_\psi\iff 
\forall \psi\in\widehat G: \gamma(c_\psi)=c_{\gamma\cdot\psi}.
\end{equation}  
By definition of the $\Gamma$-action, and since $\FF$ is the fixed subfield of $\Gamma$ in $\field$, 
we have 
$\mathcal{F}(G,\FF)=\{f\in \mathcal{F}(G,\field)\mid \forall \gamma\in \Gamma:\gamma(f)=f\}$. 
So our statement follows by \eqref{eq:gamma(f)=f}.  
\end{proof} 

For a subset $M$ of $\mathcal{B}(\widehat G)\subset \mathcal{F}(G,\ZZ)$ and a subset 
$I\subseteq\widehat G$, set 
\[M_I:=\{m\in M\mid m(\psi)=0\text{ for all }\psi\in\widehat G\setminus I\},\]
and write $\langle M_I\rangle_\ZZ$ for the $\ZZ$-submodule of $\mathcal{F}(G,\ZZ)$ 
generated by $M_I$. 
Note that when $M$ is a submonoid of $\mathcal{B}(\widehat G)$, then 
$M_I$ is also a submonoid of $\mathcal{B}(\widehat G)$. 
The coordinate ring of $\mathcal{F}(G,\field)$ is a polynomial algebra 
$\field[x_\psi\mid \psi\in \widehat G]$, where $\{x_\psi \mid \psi\in \widehat G\}$ is a basis 
dual to the basis $\{f_\psi\mid \psi\in \widehat G\}$ in $\mathcal{F}(G,\field)$. 
In particular, we have that $g\cdot x_\psi=\psi(g)^{-1}x_\psi$ for $g\in G$. 
For $m\in \mathcal{B}(\widehat G)$ we set 
$x^m:=\prod_{\psi\in \widehat G}x_\psi^{m(\psi)}$. 
As we pointed out in \eqref{eq:B(G) span}, $\{x^m\mid m\in \mathcal{B}(\widehat G)\}$ is a $\field$-vector space basis 
of $\field[x_\psi\mid \psi\in\widehat G]^G$. 
If $m\in M_I$, then $x^m$ belongs to the subalgebra $\field[x_\psi\mid \psi\in I]$ of 
$\field[x_\psi\mid \psi\in\widehat G]$. 

\begin{theorem}\label{thm:separating monomials} 
Let $M$ be a subset of $\mathcal{B}(\widehat G)$. The polynomial 
$G$-invariants $\{x^m\mid m\in M\}$ separate the $G$-orbits in the subspace 
$\mathcal{F}(G,\FF)$ of $\mathcal{F}(G,\field)$ if  condition (*) below holds for $M$: 
\[(*)\text{ for all }\Gamma\text{-stable subsets}\ I \text{ of }\widehat G, \text{ we have }
\langle M_I\rangle_\ZZ\supseteq \mathcal{B}(\widehat G)_I.\] 
\end{theorem}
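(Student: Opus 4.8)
The plan is to prove the contrapositive: assuming $v,w\in\mathcal{F}(G,\FF)$ satisfy $x^m(v)=x^m(w)$ for all $m\in M$, I will deduce $G\cdot v=G\cdot w$. Writing $v=\sum_\psi c_\psi f_\psi$ and $w=\sum_\psi d_\psi f_\psi$ in the basis of Lemma~\ref{lemma:f_psi}, so that $c_\psi=x_\psi(v)$ and $d_\psi=x_\psi(w)$, the entire argument is organized around the two \emph{supports} $I:=\{\psi\mid c_\psi\neq 0\}$ and $J:=\{\psi\mid d_\psi\neq 0\}$. The first observation is that both are $\Gamma$-stable: by Lemma~\ref{lemma:F(G,F)} the coordinates obey $c_{\gamma\psi}=\gamma(c_\psi)$, so $c_\psi=0$ if and only if $c_{\gamma\psi}=0$, and likewise for $w$. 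This $\Gamma$-stability is the only way the hypothesis $v,w\in\mathcal{F}(G,\FF)$ enters the argument; everything afterwards is combinatorics of $\mathcal{B}(\widehat G)$ together with condition~(*).

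The key step is to show $I=J$, and the idea is to probe one character at a time. For $\psi\in I$ put $r:=\mathrm{ord}(\psi)$; then $r\delta_\psi$ is a product-one sequence supported on the single character $\psi$, so $r\delta_\psi\in\mathcal{B}(\widehat G)_I$. Applying condition~(*) to the $\Gamma$-stable set $I$ gives $r\delta_\psi\in\langle M_I\rangle_\ZZ$, i.e. $r\delta_\psi=\sum_i n_i m_i$ with $m_i\in M_I$ and $n_i\in\ZZ$. Each $m_i$ is supported inside $I=\mathrm{supp}(v)$, hence $x^{m_i}(v)=\prod_{\psi'}c_{\psi'}^{m_i(\psi')}\neq 0$; since $x^{m_i}(v)=x^{m_i}(w)$ this forces $x^{m_i}(w)\neq 0$, so $\mathrm{supp}(m_i)\subseteq J$. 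Comparing $\psi$-coordinates in $r\delta_\psi=\sum_i n_i m_i$ shows $m_i(\psi)\neq 0$ for at least one $i$, whence $\psi\in\mathrm{supp}(m_i)\subseteq J$. Thus $I\subseteq J$, and the symmetric argument gives $I=J$.

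With the common support $I$ in hand, I would finish by checking $x^b(v)=x^b(w)$ for every $b\in\mathcal{B}(\widehat G)$. If $\mathrm{supp}(b)\not\subseteq I$, then a factor $c_\psi^{b(\psi)}$ (resp. $d_\psi^{b(\psi)}$) with $\psi\notin I$ vanishes, so both sides are $0$. If $\mathrm{supp}(b)\subseteq I$, i.e. $b\in\mathcal{B}(\widehat G)_I$, then condition~(*) again yields $b=\sum_i n_i m_i$ with $m_i\in M_I$; this is an identity of Laurent monomials in the variables $x_\psi$ with $\psi\in I$, and since all coordinates of $v$ and of $w$ indexed by $I$ are nonzero, evaluation gives $x^b(v)=\prod_i x^{m_i}(v)^{n_i}=\prod_i x^{m_i}(w)^{n_i}=x^b(w)$, exactly as in the proof of Proposition~\ref{prop:deg 3 separating on torus}. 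Hence $x^b(v)=x^b(w)$ for all $b\in\mathcal{B}(\widehat G)$, so by \eqref{eq:B(G) span} every element of $\field[x_\psi\mid\psi\in\widehat G]^G$ agrees on $v$ and $w$, and \eqref{eq:separability by invariants} then gives $G\cdot v=G\cdot w$.

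I expect the support-matching step $I=J$ to be the main obstacle, because Case~2 of the final step genuinely needs both vectors to be nonvanishing on all of $I$ for the Laurent-monomial evaluation of $x^b(w)$ to be legitimate; the device of using $\mathrm{ord}(\psi)\,\delta_\psi$ to isolate a single coordinate, fed into condition~(*) for the $\Gamma$-stable set $I$, is what makes this work in the absence of a diagonalization of the representation over $\FF$.
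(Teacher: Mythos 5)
Your proof is correct and is essentially the paper's own argument in contrapositive form: your support-matching step $I=J$ (via the probe $\mathrm{ord}(\psi)\delta_\psi$, $\Gamma$-stability of coordinate supports from Lemma~\ref{lemma:F(G,F)}, and condition (*)) mirrors the paper's case $\mathrm{supp}(v)\neq\mathrm{supp}(w)$, and your Laurent-monomial evaluation over the common support mirrors the paper's case $\mathrm{supp}(v)=\mathrm{supp}(w)$, with the same final appeal to \eqref{eq:B(G) span} and \eqref{eq:separability by invariants}. If anything, your handling of general $\ZZ$-linear combinations $\sum_i n_i m_i$ with $m_i\in M_I$ is slightly more careful than the paper's phrasing, which writes $b=m-m'$ with $m,m'\in M_I$ rather than in the monoid generated by $M_I$.
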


To prove Theorem~\ref{thm:separating monomials} we need Lemma~\ref{lemma:x^m(v)=0} below. 
Recall the direct sum decomposition 
$\mathcal{F}(G,\FF)=\bigoplus_{\chi\in \widehat G/\Gamma} V_\chi$ from \eqref{eq:bigoplus V_chi}, 
so $v\in \mathcal{F}(G,\FF)$ can be written as 
$v=(v_\chi\mid \chi\in \widehat G/\Gamma)$, where $v_\chi\in V_\chi$. 
Set $\mathrm{supp}(v):=\{\chi\in \widehat G/\Gamma\mid v_\chi\neq 0\}$.   

\begin{lemma}\label{lemma:x^m(v)=0} 
For $v\in \mathcal{F}(G,\FF)$, $\psi\in \widehat G$, and $m\in \mathcal{B}(\widehat G)$ we have 
\begin{itemize} 
\item[(i)] $x_\psi(v)\neq 0$ if and only if $\psi\in \bigcup_{\chi\in \mathrm{supp}(v)}\Gamma\cdot\chi$. 
\item[(ii)] $x^m(v)\neq 0$ if and only if $m\in \mathcal{B}(\widehat G)_I$ where $I= \bigcup_{\chi\in \mathrm{supp}(v)}\Gamma\cdot\chi$. 
\end{itemize}
\end{lemma}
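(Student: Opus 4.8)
The plan is to pass to the $\{f_\psi\}$ basis and simply read off the coordinates. First I would write $v=\sum_{\psi\in\widehat G}c_\psi f_\psi$ with $c_\psi\in\field$; since $\{x_\psi\mid\psi\in\widehat G\}$ is dual to $\{f_\psi\mid\psi\in\widehat G\}$, this gives immediately $x_\psi(v)=c_\psi$. The decisive input is Lemma~\ref{lemma:F(G,F)}: because $v$ lies in the fixed subspace $\mathcal{F}(G,\FF)$, its coordinates satisfy $c_{\gamma\cdot\psi}=\gamma(c_\psi)$ for all $\gamma\in\Gamma$ and all $\psi\in\widehat G$. Thus the coordinates indexed by a single $\Gamma$-orbit form a Galois-conjugate family.

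For part (i), the point I would make is that a field automorphism sends $0$ to $0$ and nonzero elements to nonzero elements, so within one orbit $\Gamma\cdot\chi$ the coordinates $c_\psi$ are either all zero or all nonzero. On the other hand, $\field\otimes_\FF V_\chi=\bigoplus_{\psi\in\Gamma\cdot\chi}\field f_\psi$ by Lemma~\ref{lemma:f_psi}, so the component $v_\chi$ of $v$ is $\sum_{\psi\in\Gamma\cdot\chi}c_\psi f_\psi$. Hence $v_\chi\neq 0$ if and only if some (equivalently every) $c_\psi$ with $\psi\in\Gamma\cdot\chi$ is nonzero. Combining the two observations, $x_\psi(v)=c_\psi\neq0$ exactly when the representative $\chi$ of the orbit $\Gamma\cdot\psi$ lies in $\mathrm{supp}(v)$, i.e. when $\psi\in\bigcup_{\chi\in\mathrm{supp}(v)}\Gamma\cdot\chi$, which is the assertion of (i).

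Part (ii) then follows formally from (i). Since $m(\psi)\ge 0$ for all $\psi$, I would expand $x^m(v)=\prod_{\psi\in\widehat G}x_\psi(v)^{m(\psi)}=\prod_{\psi\in\widehat G}c_\psi^{m(\psi)}$, a product of nonnegative powers of the coordinates. Such a product is nonzero if and only if $c_\psi\neq0$ for every $\psi$ with $m(\psi)>0$, i.e. if and only if the support of $m$ is contained in $I=\bigcup_{\chi\in\mathrm{supp}(v)}\Gamma\cdot\chi$. By the definition of $\mathcal{B}(\widehat G)_I$ this last condition says precisely that $m\in\mathcal{B}(\widehat G)_I$, giving (ii).

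I do not expect a serious obstacle here: the content is entirely in the Galois-conjugacy relation $c_{\gamma\cdot\psi}=\gamma(c_\psi)$ supplied by Lemma~\ref{lemma:F(G,F)}, which forces the ``all-or-nothing'' vanishing of the coordinates along each $\Gamma$-orbit. The only thing to watch is the bookkeeping that matches the component $v_\chi\in V_\chi$ with the block of coordinates indexed by $\Gamma\cdot\chi$, and this is exactly what the refined basis of Lemma~\ref{lemma:f_psi} is designed to provide.
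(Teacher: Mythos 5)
Your proof is correct and takes essentially the same route as the paper: both arguments read off the coordinates $x_\psi(v)$ in the basis $\{f_\psi\mid\psi\in\widehat G\}$, invoke Lemma~\ref{lemma:F(G,F)} to propagate non-vanishing of these coordinates along each $\Gamma$-orbit, identify the component $v_\chi$ with the coordinate block indexed by $\Gamma\cdot\chi$ via Lemma~\ref{lemma:f_psi}, and obtain (ii) directly from (i). The only difference is that you make explicit the ``all-or-nothing'' vanishing along orbits and the expansion $x^m(v)=\prod_{\psi}x_\psi(v)^{m(\psi)}$, steps the paper's proof leaves implicit.
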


\begin{proof} (i)  Recall that  $v_\chi=\sum_{\lambda\in \Gamma\cdot \chi}x_\lambda(v)f_\lambda$. 
If $x_\psi(v)$ is non-zero, then $v_\mu\neq 0$ for the unique $\mu\in \widehat G/\Gamma$ for which 
$\psi\in \Gamma\cdot \mu$. Then $\mu\in \mathrm{supp}(v)$, and so $\psi\in \bigcup_{\chi\in \mathrm{supp}(v)}\Gamma\cdot\chi$. 
For the reverse implication assume that $\psi\in \Gamma\cdot\chi$ for some $\chi\in \mathrm{supp}(v)$. So $v_\chi\neq 0$ 
and thus $x_\lambda(v)\neq 0$ for some $\lambda$ from $\Gamma\cdot\chi=\Gamma\cdot \psi$. 
Here $\psi,\lambda\in\widehat G$ belong to the same $\Gamma$-orbit in $\widehat G$, hence by Lemma~\ref{lemma:F(G,F)},   
$x_\lambda(v)\neq 0$ implies $x_\psi(v)\neq 0$. 

(ii) follows from (i). 
\end{proof}

\begin{proofof}{Theorem~\ref{thm:separating monomials}}
Suppose that $M\subseteq\mathcal{B}(\widehat G)$ satisfies condition (*), and 
$v,w\in \mathcal{F}(G,\FF)$ with $G\cdot v\neq G\cdot w$. 
We have to show that there exists an $m\in M$ with $x^m(v)\neq x^m(w)$. 

First we deal with the case when $\mathrm{supp}(v)\neq \mathrm{supp}(w)$. 
Then by symmetry we may assume that there exists a $\chi\in \mathrm{supp}(v)\setminus\mathrm{supp}(w)$. 
Let $I$ be the $\Gamma$-orbit of $\chi$. The element $\mathrm{ord}(\chi)\delta_{\chi}$ belongs to 
$\mathcal{B}(\widehat G)_I$, therefore by assumption (*) for $M$, there exist $m,m'\in M_I\subseteq \mathcal{B}(\widehat G)_I$ with 
$\mathrm{ord}(\chi)\delta_{\chi}=m-m'$. It follows that $m(\chi)>0$.  
By Lemma~\ref{lemma:x^m(v)=0} we have $x^m(v)\neq 0$, whereas $x^m(w)=0$. 
In particular, we found an $m\in M$ with $x^m(v)\neq x^m(w)$. 

From now on we assume that $\mathrm{supp}(v)=\mathrm{supp}(w)$. 
Since $G\cdot v\neq G\cdot w$, by \eqref{eq:separability by invariants} and \eqref{eq:B(G) span} we know that  
there exists a $b\in \mathcal{B}(\widehat G)$ with $x^b(v)\neq x^b(w)$. 
Set $I:= \bigcup_{\chi\in \mathrm{supp}(v)}\Gamma\cdot\chi= \bigcup_{\chi\in \mathrm{supp}(w)}\Gamma\cdot\chi$. 
Then $b\in \mathcal{B}(\widehat G)_I$ 
(otherwise by Lemma~\ref{lemma:x^m(v)=0}  (ii) 
we would have $x^b(v)=0=x^b(w)$, contrary to the choice of $b$). 
We can write $b$ as $b=m-m'$ for some $m,m'\in M_I$ by assumption (*) on $M$. 
By Lemma~\ref{lemma:x^m(v)=0} (ii) we know that none of $x^m(v)$, $x^{m'}(v)$, $x^m(w)$, $x^{m'}(w)$ is zero. 
Consequently, $x^b(v)=\frac{x^m(v)}{x^{m'}(v)}$ and  $x^b(w)=\frac{x^m(w)}{x^{m'}(w)}$. 
Therefore $x^b(v)\neq x^b(w)$ implies that $x^m(v)\neq x^m(w)$ or $x^{m'}(v)\neq x^{m'}(w)$, and we are done. 
\end{proofof} 

\begin{corollary} \label{cor:sepnoether}
Let $d$ be a positive integer satisfying that 
for all $\Gamma$-stable subsets $I$ of $\widehat G$, 
the monoid $\mathcal{B}(\widehat G)_I$ is contained in the $\ZZ$-submodule of 
$\mathcal{F}(G,\ZZ)$ generated by 
$\{m\in\mathcal{B}(\widehat G)_I\mid \sum_{\psi\in\widehat G}m(\psi)\le d\}$. 
Then we have 
\begin{itemize}
\item[(i)]  $\sepbeta(G,\mathcal{F}(G,\FF))\le d$. 
\item[(ii)] $\sepbeta^\FF(G)\le d$. 
\end{itemize} 
\end{corollary}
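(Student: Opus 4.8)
The plan is to obtain both parts from Theorem~\ref{thm:separating monomials}, treating (i) as a direct application and (ii) as its propagation to arbitrary modules.

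For (i), I set $M:=\{m\in\mathcal{B}(\widehat G)\mid \sum_{\psi\in\widehat G}m(\psi)\le d\}$, the product-one sequences over $\widehat G$ of length at most $d$. Since every $m\in\mathcal{B}(\widehat G)$ is $\NN_0$-valued, for each $\Gamma$-stable $I\subseteq\widehat G$ we have $M_I=\{m\in\mathcal{B}(\widehat G)_I\mid \sum_\psi m(\psi)\le d\}$, so the hypothesis of the corollary is exactly condition (*) for this $M$. Theorem~\ref{thm:separating monomials} then says that the invariants $x^m$ ($m\in M$), which are homogeneous of degree $\sum_\psi m(\psi)\le d$, separate the $G$-orbits in $\mathcal{F}(G,\FF)$. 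These $x^m$ lie in $\field[x_\psi\mid\psi\in\widehat G]^G$, i.e.\ they are defined over $\field$, so the last step is a descent to $\FF$: exactly as in the proof of Theorem~\ref{thm:main}, each graded component of $\field[\field\otimes_\FF\mathcal{F}(G,\FF)]^G$ is spanned over $\field$ by the corresponding graded component of $\FF[\mathcal{F}(G,\FF)]^G$, so each separating $x^m$ is a $\field$-linear combination of $\FF$-invariants of the same degree $\le d$, and whenever $x^m(v)\ne x^m(w)$ at least one of these $\FF$-invariants already separates $v$ and $w$. This gives $\sepbeta(G,\mathcal{F}(G,\FF))\le d$.

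For (ii) I would first reduce to copies of the regular representation. Since $\mathrm{char}(\FF)\nmid|G|$ the algebra $\FF G$ is semisimple and every irreducible $\FF G$-module occurs in $\mathcal{F}(G,\FF)$, so an arbitrary finite dimensional $\FF G$-module $W$ is a direct summand of $\mathcal{F}(G,\FF)^{\oplus m}$ for some $m$. Restricting an invariant on the larger module to $W$ preserves both the separation of a given pair and the degree, hence $\sepbeta(G,W)\le\sepbeta(G,\mathcal{F}(G,\FF)^{\oplus m})$, and it suffices to prove $\sepbeta(G,\mathcal{F}(G,\FF)^{\oplus m})\le d$ for every $m$. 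For this I would rerun the machinery of Section~\ref{sec:non-closed} with the index set $\widehat G$ replaced by $\widehat G\times\{1,\dots,m\}$: the variables $x_{\psi,j}$ satisfy $g\cdot x_{\psi,j}=\psi(g)^{-1}x_{\psi,j}$, the invariant monomials correspond to the generalized block monoid $\widetilde{\mathcal{B}}:=\{b\mid \pi(b)\in\mathcal{B}(\widehat G)\}$, where $\pi$ sums a function over the copy-index $j$, and Lemmas~\ref{lemma:f_psi}, \ref{lemma:F(G,F)}, \ref{lemma:x^m(v)=0} together with the proof of Theorem~\ref{thm:separating monomials} carry over with the obvious changes to $\Gamma$-stable subsets $\tilde I\subseteq\widehat G\times\{1,\dots,m\}$.

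The crux of (ii), and the step I expect to be the main obstacle, is then purely combinatorial: one must show that condition (*) for $\widehat G$ forces, for every $\Gamma$-stable $\tilde I$, that $\widetilde{\mathcal{B}}_{\tilde I}$ lies in the $\ZZ$-span of its elements of length at most $d$. Given $b\in\widetilde{\mathcal{B}}_{\tilde I}$, its projection $\pi(b)$ is supported on the $\Gamma$-stable set $I'$ of characters occurring in $\tilde I$, so by (*) it is a $\ZZ$-combination of short product-one sequences over $\widehat G$; lifting each such short sequence into $\tilde I$ (possible since $I'$ is covered by the copies) produces short elements of $\widetilde{\mathcal{B}}_{\tilde I}$ whose combination agrees with $b$ after applying $\pi$. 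The remaining correction lies in $\ker\pi$ and I would express it through ``copy swaps'' $\delta_{(\psi,j)}-\delta_{(\psi,j')}$, each the difference of two short generators that differ only by moving one unit of $\psi$ between copies $j$ and $j'$; here the fact that $\Gamma$-stability of $\tilde I$ forces whole $\Gamma$-orbits into each individual copy, together with the existence (again from (*)) of a short product-one sequence containing $\psi$, makes both lifts legitimate. Establishing this lifting cleanly---rather than the routine descent of (i) or the direct-summand reduction---is where the real work lies, after which $\sepbeta(G,\mathcal{F}(G,\FF)^{\oplus m})\le d$ for all $m$, and hence $\sepbeta^\FF(G)\le d$, follow at once.
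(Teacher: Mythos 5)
Your proof is correct. Part (i) coincides with the paper's own argument: the same choice of $M$, the same appeal to Theorem~\ref{thm:separating monomials}, and the same descent from $\field$ to $\FF$ via the fact that each graded component of $\field[\mathcal{F}(G,\field)]^G$ is spanned by the corresponding graded component of $\FF[\mathcal{F}(G,\FF)]^G$.

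The difference is in part (ii), and it is worth recording. After the same reduction to $\mathcal{F}(G,\FF)^{\oplus n}$, the paper disposes of the multi-copy case in one sentence, asserting that variants of Lemma~\ref{lemma:F(G,F)}, Lemma~\ref{lemma:x^m(v)=0} and Theorem~\ref{thm:separating monomials} can be proved in the same way with heavier notation, and it offers a shortcut for infinite $\FF$: by polarization \cite[Theorem 3.4 (ii)]{draisma-kemper-wehlau} one has $\sepbeta(G,\mathcal{F}(G,\FF)^{\oplus n})=\sepbeta(G,\mathcal{F}(G,\FF))$, so (ii) follows from (i) at once. You instead carry the multi-copy machinery through, and in doing so you isolate the one step that is genuinely more than a change of notation and that the paper leaves implicit: the hypothesis of the corollary concerns the single-copy monoid $\mathcal{B}(\widehat G)$, so one must show it implies the multi-copy condition (*) for $\widetilde{\mathcal{B}}=\pi^{-1}(\mathcal{B}(\widehat G))$ (where $\pi$ adds up values over the copy index) relative to $\Gamma$-stable subsets $\tilde I\subseteq\widehat G\times\{1,\dots,n\}$. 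Your lifting argument for this is sound: by the hypothesis $\pi(b)$ is an integral combination of sequences of length at most $d$; these lift into $\tilde I$; the error term lies in $\ker\pi$, is supported on $\tilde I$, and decomposes into swaps $\delta_{(\psi,j)}-\delta_{(\psi,j')}$; and each swap is the difference of two length-at-most-$d$ lifts of a product-one sequence $m$ with $m(\psi)>0$ and support in $\Gamma\cdot\psi$, which exists by applying the hypothesis to $I=\Gamma\cdot\psi$ and the element $\mathrm{ord}(\psi)\delta_\psi$, the lifts staying inside $\tilde I$ because $\Gamma$-stability places the whole orbit $\Gamma\cdot\psi$ in the relevant copies. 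In short, your version is a completed, self-contained form of the paper's first alternative, valid without assuming $\FF$ infinite, whereas the paper's polarization shortcut is quicker but needs $\FF$ infinite; the two are otherwise the same strategy.
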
 

\begin{proof} 
(i) Assume that the positive integer $d$ satisfies that for all $\Gamma$-stable subsets $I$ of $\widehat G$, 
the monoid $\mathcal{B}(\widehat G)_I$ is contained in the $\ZZ$-submodule of 
$\mathcal{F}(\widehat G,\ZZ)$ generated by 
$\{m\in\mathcal{B}(\widehat G)_I\mid \sum_{\psi\in\widehat G}m(\psi)\le d\}$. 
Setting $M:=\{m\in \mathcal{B}(\widehat G)\mid \sum_{\psi\in\widehat G}m(\psi)\le d\}$ we have 
that $M$ satisfies condition (*) from Theorem~\ref{thm:separating monomials}, 
hence the $G$-invariants 
$\{x^m\mid m\in M\}$ in $\field[\mathcal{F}(G,\field)]$ 
separate the $G$-orbits in the $\FF$-vector subspace 
$\mathcal{F}(G,\FF)$ of $\mathcal{F}(G,\field)$. 
Note that  $\mathrm{Span}_\field\{x^m\mid m\in M\}$ is 
the union of the homogeneous components of $\field[\mathcal{F}(G,\field)]^G$ of degree at most $d$ by 
\eqref{eq:B(G) span}. Recall that for all $j$, the degree $j$ homogeneous component of  $\field[\mathcal{F}(G,\field)]^G$ 
is spanned as a $\field$-vector space by the degree $j$ homogeneous component of $\FF[\mathcal{F}(G,\FF)]^G$. 
Since the homogeneous components of $\field[\mathcal{F}(G,\field)]^G$  of degree at most $d$ separate the $G$-orbits in 
$\mathcal{F}(G,\FF)$, the same holds for its spanning subset, consisting of the  elements of degree at most $d$ in  $\FF[\mathcal{F}(G,\FF)]^G$. 

(ii) Every finite dimensional representation of $G$  is a direct summand in a direct sum of several copies of the 
regular representation. Therefore it is sufficient to show that 
$\sepbeta^\FF(G,V)\le d$ where 
$V=\mathcal{F}(G,\FF)\oplus \cdots\oplus \mathcal{F}(G,\FF)=\mathcal{F}(G,\FF)^{\oplus n}$. 
Then $\field\otimes_\FF V\cong \bigoplus_{\psi\in\widehat G}(\field,\psi)^{\oplus n}$, 
and the coordinate ring $\field[\field\otimes_\FF V]$ has variables 
$\{x_\psi^{(1)},\dots,x_\psi^{(n)}\mid \psi\in\widehat G\}$ with 
$g\cdot  x_\psi^{(j)}=\psi^{-1}(g)x_\psi^{(j)}$. 
A variant of Lemma~\ref{lemma:F(G,F)}, Lemma~\ref{lemma:x^m(v)=0},  
and Theorem~\ref{thm:separating monomials} for this more general setup 
can be proved in the same way as before, the only difference is that 
the notation becomes considerably more complicated. 
Alternatively, if $\FF$ is large enough (say $\FF$ is infinite), 
then one may refer to \cite[Theorem 3.4 (ii)]{draisma-kemper-wehlau} implying that 
$\sepbeta(G,\mathcal{F}(G,\FF)^{\oplus n})=\sepbeta(G,\mathcal{F}(G,\FF))$. 
\end{proof} 

\begin{remark} 
A characterization of the subsets $M\subseteq \mathcal{B}(\widehat G)$ such that 
$\{x^m\mid m\in M\}$ separates the $G$-orbits in $\mathcal{F}(G,\field)$ was given in 
\cite[Theorem 2.1]{domokos:abelian} (see \cite[Proposition 3.2]{cahill-contreras-hip} for some additional information on 
separating monomials), yielding in \cite[Corollary 2.6]{domokos:abelian} a combinatorial characterization of $\sepbeta^\field(G)$ in terms of the block monoid of 
$\widehat G$.  Theorem~\ref{thm:separating monomials}  and Corollary~\ref{cor:sepnoether} generalize \cite[Proposition 2.3]{domokos:abelian} 
(one direction of \cite[Theorem 2.1]{domokos:abelian}) and one inequality of \cite[Corollary 2.6]{domokos:abelian} 
for the case of base fields that are not necessarily algebraically closed.  
\end{remark} 

As an application (and illustration) of Corollary~\ref{cor:sepnoether} we compute the 
separating Noether number over the field $\RR$ of real numbers of the 
direct product $\mathrm{C}_3\times\mathrm{C}_3$ of two copies of the cyclic group $\mathrm{C}_3$ 
of order $3$:  

\begin{proposition}\label{prop:C3xC3} 
We have the equality $\sepbeta^\RR(\mathrm{C}_3\times\mathrm{C}_3)=3$. 
\end{proposition}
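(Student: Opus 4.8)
The plan is to prove the two inequalities $\sepbeta^\RR(\mathrm{C}_3\times\mathrm{C}_3)\ge 3$ and $\sepbeta^\RR(\mathrm{C}_3\times\mathrm{C}_3)\le 3$ separately. For the lower bound I would use monotonicity under quotients: the projection $\mathrm{C}_3\times\mathrm{C}_3\twoheadrightarrow \mathrm{C}_3$ allows one to inflate any real $\mathrm{C}_3$-module $V$ to a $\mathrm{C}_3\times\mathrm{C}_3$-module on which the orbits and the polynomial invariants are unchanged, so $\sepbeta(\mathrm{C}_3,V)=\sepbeta(\mathrm{C}_3\times\mathrm{C}_3,V)$ and hence $\sepbeta^\RR(\mathrm{C}_3)\le\sepbeta^\RR(\mathrm{C}_3\times\mathrm{C}_3)$. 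Since $\sepbeta^\RR(\mathrm{C}_3)=3$ (the case $p=3$ of the equality $\sepbeta^\RR(\mathrm{C}_p)=p$ recalled in the remark after Corollary~\ref{cor:all rational reps}), this gives $\sepbeta^\RR(\mathrm{C}_3\times\mathrm{C}_3)\ge 3$.

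For the upper bound I would apply Corollary~\ref{cor:sepnoether} with $d=3$. Here $\mathrm{exp}(G)=3$, so $\field=\RR(\omega)=\CC$ with $\omega$ a primitive cube root of unity, the group $\Gamma=\{1,\text{complex conjugation}\}$ has order $2$, and under the identification $\widehat G\cong \FF_3^2$ it acts by negation $\chi\mapsto-\chi$. Thus the $\Gamma$-orbits are $\{0\}$ together with the four sign-pairs $\{\pm x\}$, which are exactly the four lines of $\FF_3^2$, and every $\Gamma$-stable subset $I$ equals $\{0\}$ (optionally) together with a union of such lines. I would first dispose of the trivial character: since $\delta_0$ has length $1$ and adjoining copies of $0$ preserves zero-sum sequences, verifying the hypothesis of Corollary~\ref{cor:sepnoether} reduces to the case where $I$ is a union of $k$ lines. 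Because $\mathrm{GL}_2(\FF_3)$ induces automorphisms of $\widehat G$ commuting with negation and permutes the four lines as $S_4$ permutes four points, all choices of $k$ lines are equivalent, so it suffices to treat one representative for each $k\in\{0,1,2,3,4\}$.

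For $k\le 2$ the lattice $\mathcal{B}(\widehat G)_I$ splits as a direct product over the independent lines and is generated by the length-$2$ sign-pairs $\delta_x+\delta_{-x}$ together with the length-$3$ elements $3\delta_{\pm x}$. For $k=4$, i.e.\ $I=\widehat G\setminus\{0\}$, I would run the reduction of Lemma~\ref{lemma:length 3 generators} while keeping the support inside $I$: at the triangle step one chooses two elements $g,h$ of the positive support with $g+h\neq 0$ (which is possible unless the positive part is a single sign-pair $\{g,-g\}$, in which case one instead peels off the length-$2$ generator $\delta_g+\delta_{-g}$ and recurses), so that $\delta_g+\delta_h+\delta_{-(g+h)}$ is supported on $I$ and the induction stays within $I$. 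The crux is $k=3$: writing the three lines as $\{\pm a,\pm b,\pm c\}$ with $c=a+b$ (a normal form reachable by the symmetry) and letting $n_x$ denote the coordinate at $x$, the module $\mathcal{B}(\widehat G)_I$ is the index-$9$ sublattice of $\ZZ^6$ cut out by $(n_a-n_{-a})+(n_c-n_{-c})\equiv 0$ and $(n_b-n_{-b})+(n_c-n_{-c})\equiv 0\pmod 3$. I would check that the length-$\le 3$ generators supported on $I$, namely the three sign-pairs $P_x=\delta_x+\delta_{-x}$, the six elements $3\delta_{\pm a},3\delta_{\pm b},3\delta_{\pm c}$, and the two triangles $T_1=\delta_a+\delta_b+\delta_{-c}$ and $T_2=\delta_{-a}+\delta_{-b}+\delta_c$, already span it. Since all six $3\delta$'s lie in the generated subgroup, it suffices to work modulo $3$: the images of $P_a,P_b,P_c,T_1,T_2$ in $\FF_3^6$ satisfy the single relation $T_1+T_2=P_a+P_b+P_c$ and otherwise have $\FF_3$-rank $4$, so the subgroup they generate together with $3\ZZ^6$ has index $9$ and therefore equals $\mathcal{B}(\widehat G)_I$.

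The main obstacle is precisely this $k=3$ case, because a triangle $\delta_g+\delta_h+\delta_{-(g+h)}$ can leave $I$ when $g+h$ lands on the omitted fourth line, so the support-preserving reduction that works for $k=4$ is unavailable; this is exactly the phenomenon isolated by condition~(*), and I expect it to be resolved by the explicit rank computation above rather than by a greedy reduction. Once all five cases are verified, Corollary~\ref{cor:sepnoether}(ii) yields $\sepbeta^\RR(\mathrm{C}_3\times\mathrm{C}_3)\le 3$, and together with the lower bound this gives the claimed equality.
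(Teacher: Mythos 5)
Your proposal is correct, and although it shares the paper's skeleton---the identical quotient argument for the lower bound (via $\sepbeta^\RR(\mathrm{C}_3)=3$), and an application of Corollary~\ref{cor:sepnoether} with $d=3$ for the upper bound---the way you verify the hypothesis of that corollary is genuinely different. The paper classifies the irreducible elements (atoms) of $\mathcal{B}(\widehat G)$ up to automorphisms of $\widehat G$, finds that up to symmetry only three atoms $b_1,b_2,b_3$ have length exceeding $3$, and disposes of each by an explicit $\ZZ$-linear identity in length-$\le 3$ sequences whose supports stay inside any $\Gamma$-stable $I$ containing the atom's support (the inverse characters forced into $I$ by $\Gamma$-stability are exactly what make these identities available). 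You instead classify the $\Gamma$-stable supports $I$ themselves (unions of $k$ punctured lines, one representative per $k\le 4$ by the $\mathrm{GL}_2(\FF_3)$-symmetry) and check condition~(*) support by support: decoupling for $k\le 2$, a support-preserving variant of the reduction of Lemma~\ref{lemma:length 3 generators} for $k=4$, and for $k=3$ a lattice computation showing that $\langle M_I\rangle_\ZZ$ and the full zero-sum lattice both have index $9$ in $\ZZ^6$, hence coincide. Your $k=3$ case is exactly where the paper's atoms $b_1,b_2,b_3$ live (they are supported on three lines), so both proofs localize the difficulty in the same place; what your route buys is that the rank-mod-$3$ count explains structurally why degree $3$ suffices and avoids classifying atoms, while the paper's route buys uniformity over all $I$ at once and identities checkable by hand with no lattice bookkeeping. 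Two cosmetic points: in the $k=3$ step you call $\mathcal{B}(\widehat G)_I$ the index-$9$ sublattice, when you mean the group $A_I$ of integer-valued zero-sum functions supported on $I$ (condition~(*) only needs $\mathcal{B}(\widehat G)_I\subseteq A_I=\langle M_I\rangle_\ZZ$); and in the $k=4$ reduction you should state that the induction runs over signed (integer-valued) sequences in $A_I$, as in Lemma~\ref{lemma:length 3 generators}, since subtracting a triangle can create negative entries. Neither affects correctness.
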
 

\begin{proof} 
Since $\mathrm{C}_3$ is a homomorphic image of $\mathrm{C}_3\times\mathrm{C}_3$ and $\sepbeta^\RR(\mathrm{C}_3)=3$ 
by  \cite[Proposition 3.13]{blumsmith-garcia-hidalgo-rodriguez}, we have $\sepbeta^\RR(\mathrm{C}_3\times\mathrm{C}_3)\ge 3$. 
The reverse inequality  $\sepbeta^\RR(\mathrm{C}_3\times\mathrm{C}_3)\le 3$ can be obtained applying 
Corollary~\ref{cor:sepnoether} for $\FF=\RR$, $\field=\CC$, and $G=\mathrm{C}_3\times\mathrm{C}_3$. 
Then $\Gamma$ is the automorphism group of the field extension $\CC$ of $\RR$, 
so $\Gamma$ is the two-element group generated by complex conjugation. 
In particular, the $\Gamma$-orbit of a character $\chi$ of $G$ consists of the character $\chi$ and 
its inverse $\chi^{-1}\in \widehat G$. The character group $\widehat G$ is also isomorphic to 
$\mathrm{C}_3\times\mathrm{C}_3$, so it is generated by two order $3$ characters $\psi_1$ 
and $\psi_2$. Writing $\chi_{(i,j)}:=\psi_1^i\psi_2^j$, we have 
$\widehat G=\{\chi_{(i,j)}\mid (i,j)\in \{0,1,2\}\times \{0,1,2\}$. 
The partition of $\widehat G$ into the disjoint union of $\Gamma$-orbits is the following: 
\[\widehat G=\{\chi_{(0,0)}\}\sqcup \{\chi_{(1,0)},\chi_{(2,0)}\}\sqcup
\{\chi_{(0,1)},\chi_{(0,2)}\}\sqcup\{\chi_{(1,1)},\chi_{(2,2)}\}\sqcup 
\{\chi_{(1,2)},\chi_{(2,1)}\}\]
In order to have a simpler notation for the elements of $\mathcal{F}(\widehat G,\ZZ)$, write 
$\delta_{(i,j)}:=\delta_{\chi_{(i,j)}}$ for the basis elements. 
The additive monoid $\mathcal{B}(\widehat G)$ is generated by its irreducible elements, 
where an element of  $\mathcal{B}(\widehat G)$ is said to be \emph{irreducible} 
if it is not the sum of two non-zero elements of $\mathcal{B}(\widehat G)$. 
Recall that the length of  $b\in \mathcal{B}(\widehat G)$ is $|b|=\sum_{\chi\in \widehat G}b(\chi)$. 
Clearly $\delta_{(0,0)}$ is the only length $1$ element of $\mathcal{B}(\widehat G)$, 
and $\delta_\chi+\delta_{\chi^{-1}}$ for $\chi\neq \chi_{(0,0)}$ are the only length $2$ irreducible elements  
in $\mathcal{B}_G$. The automorphism group of the group $\widehat G$ acts naturally by 
$\ZZ$-module automorphisms on $\mathcal{F}(\widehat G, \ZZ)$ and by monoid automorphisms of the 
submonoid $\mathcal{B}(\widehat G)$. We claim that up to such automorphisms, the only irreducible elements 
in $\mathcal{B}(\widehat G)$ with length greater than $2$ are 
\begin{align*} 3\delta_{(1,0)},\qquad 
\delta_{(1,0)}+\delta_{(0,1)}+\delta_{(2,2)}, \\
b_1:=\delta_{(1,0)}+\delta_{(0,1)}+2\delta_{(1,1)}, \qquad 
b_2:=\delta_{(1,0)}+2\delta_{(0,1)}+\delta_{(2,1)},\\ 
b_3:=2\delta_{(1,0)}+2\delta_{(0,1)}+\delta_{(1,1)}.
\end{align*} 
Indeed, if for an irreducible $b\in \mathcal{B}(\widehat G)$ with $|b|>2$ 
there are two different non-trivial characters $\sigma_1,\sigma_2$ with $b(\sigma_1)\neq 0$ and 
$b(\sigma_2)\neq 0$, then applying an automorphism of $\widehat G$ we may reduce to the case 
$\sigma_1=\chi_{(1,0)}$ and $\sigma_2=\chi_{(0,1)}$. 
Moreover, $b(\chi_{(1,0)})\in \{1,2\}$ and  $b(\chi_{(1,0)})\in \{1,2\}$.   
After these observations it is easy to sort out the few possibilities, and 
conclude the claim. In particular, $b_1,b_2,b_3$ represent the orbits of all irreducible elements in 
$\mathcal{B}(\widehat G)$ with length strictly greater than $3$. 

Set $M:=\{b\in \mathcal{B}(\widehat G)\mid |b|\le 3\}$  
If  $b_1\in \mathcal{B}(\widehat G)_I$ for some $\Gamma$-stable subset $I$ 
of $\widehat G$, then  
$\chi_{(1,0)},\chi_{(0,1)},\chi_{(1,1)},\chi_{(2,2)}\in I$, and 
the equality 
\[\delta_{(1,0)}+\delta_{(0,1)}+2\delta_{(1,1)}=
(\delta_{(1,0)}+\delta_{(0,1)}+\delta_{(2,2)})+2(\delta_{(1,1)}+\delta_{(2,2)})-3\delta_{(2,2)}\] 
shows that $b_1\in \langle M_I\rangle_\ZZ$. 

If  $b_2\in \mathcal{B}(\widehat G)_I$ for some $\Gamma$-stable subset $I$ 
of $\widehat G$, then  
$\chi_{(1,0)},\chi_{(0,1)},\chi_{(2,1)},\chi_{(0,2)}\in I$, and 
the equality 
\[\delta_{(1,0)}+2\delta_{(0,1)}+\delta_{(2,1)}
=(\delta_{(1,0)}+\delta_{(0,2)}+\delta_{(2,1)})+2(\delta_{(0,1)}+\delta_{(0,2)})-3\delta_{(0,2)}\] 
shows that $b_2\in \langle M_I\rangle_\ZZ$. 

If  $b_3\in \mathcal{B}(\widehat G)_I$ for some $\Gamma$-stable subset $I$ 
of $\widehat G$, then 
$\chi_{(1,0)},\chi_{(0,1)},\chi_{(1,1)},\chi_{(2,2)}\in I$, and  
the equality 
\[2\delta_{(1,0)}+2\delta_{(0,1)}+\delta_{(1,1)}=2(\delta_{(1,0)}+\delta_{(0,1)}+\delta_{(2,2)})
+(\delta_{(1,1)}+\delta_{(2,2)})-3\delta_{(2,2)}\] 
shows that $b_3\in \langle M_I\rangle_\ZZ$. 

Thus we verified that the condition of Corollary~\ref{cor:sepnoether} is satisfied by $d=3$, 
hence the desired inequality $\sepbeta^\RR(\mathrm{C}_3\times\mathrm{C}_3)\le 3$ 
holds. 
\end{proof}

\begin{remark} 
It is shown in \cite[Theorem 1.2]{schefler_c_n^r} that $\sepbeta^\CC(\mathrm{C}_3\times\mathrm{C}_3)=4$ 
(the inequality $\sepbeta^\CC(\mathrm{C}_3\times\mathrm{C}_3)\le4$ follows essentially from the equality 
$b_3=b_1+b_1-3\delta_{(1,1)}$). 
Moreover, $\beta^\RR(\mathrm{C}_3\times\mathrm{C}_3)=\beta^\CC(\mathrm{C}_3\times\mathrm{C}_3)=5$ 
(see \cite{cziszter-domokos-szollosi} for information on $\beta^\FF(G)$).  
\end{remark}

\section{An example} \label{sec:an example}

We give an example showing that the results of the present paper yield a notable 
decrease in the size of a separating set of invariants for a real representation 
of a finite abelian group, compared to systems of polynomial invariants whose separating property is guaranteed 
by the prior known results.  
Consider $V:=\RR^6$ endowed with the natural representation of the matrix group $G$ generated by 
\[g_1:=\left(\begin{array}{rrrrrr}
-\frac{1}{2} & -\frac{\sqrt{3}}2& 0 & 0 & 0 & 0 \\
\frac{\sqrt{3}}2& -\frac{1}{2} & 0 & 0 & 0 & 0 \\
0 & 0 & 1 & 0 & 0 & 0 \\
0 & 0 & 0 & 1 & 0 & 0 \\
0 & 0 & 0 & 0 & -\frac{1}{2} & -\frac{\sqrt{3}}2\\
0 & 0 & 0 & 0 & \frac{\sqrt{3}}2& -\frac{1}{2}
\end{array}\right), 
\qquad g_2:=\left(\begin{array}{rrrrrr}
1 & 0 & 0 & 0 & 0 & 0 \\
0 & 1 & 0 & 0 & 0 & 0 \\
0 & 0 & -\frac{1}{2} & -\frac{\sqrt{3}}2& 0 & 0 \\
0 & 0 & \frac{\sqrt{3}}2& -\frac{1}{2} & 0 & 0 \\
0 & 0 & 0 & 0 & -\frac{1}{2} & -\frac{\sqrt{3}}2\\
0 & 0 & 0 & 0 & \frac{\sqrt{3}}2& -\frac{1}{2}
\end{array}\right).\]
The group $G$ is isomorphic to $\mathrm{C}_3\times\mathrm{C}_3$, hence by Proposition~\ref{prop:C3xC3}, 
$\sepbeta(G,V)\le 3$. 
A minimal homogeneous system of generators of $\RR[V]^G$ can be computed by the online CoCalc platform \cite{CoCalc}, and 
we get that the $G$-orbits in $V$ are separated by the following invariants 
(the elements of degree at most $3$ in the minimal homogeneous generating system):   
\begin{align*}x_{5}^{2} + x_{6}^{2}, \quad x_{3}^{2} + x_{4}^{2}, \quad x_{1}^{2} + x_{2}^{2}, 
\quad x_{5}^{2} x_{6} - \frac{1}{3} x_{6}^{3},\quad  x_{5}^{3} - 3 x_{5} x_{6}^{2}, 
\\ \quad x_{2} x_{3} x_{5} + x_{1} x_{4} x_{5} - x_{1} x_{3} x_{6} + x_{2} x_{4} x_{6}, 
\quad x_{1} x_{3} x_{5} - x_{2} x_{4} x_{5} + x_{2} x_{3} x_{6} + x_{1} x_{4} x_{6}, 
\\ x_{3}^{2} x_{4} - \frac{1}{3} x_{4}^{3}, \quad x_{3}^{3} - 3 x_{3} x_{4}^{2}, \quad x_{1}^{2} x_{2} - \frac{1}{3} x_{2}^{3},\quad  x_{1}^{3} - 3 x_{1} x_{2}^{2}
\end{align*} 
The known results prior to this paper guaranteed that 
$\sepbeta^\RR(G,V)\le\sepbeta^\CC(G,\CC\otimes_\RR V)\le 4$ (see \cite{domokos:abelian}). 
Diagonalizing the representation over $\CC$, one can deduce from \cite[Proposition 2.4]{domokos:abelian} that 
in order to separate the orbits in the complexification $\CC[\CC\otimes_\RR V]$, 
we need to include the following $6$ degree $4$ invariants: 
\begin{align*} 
x_{1} x_{3} x_{5}^{2} - x_{2} x_{4} x_{5}^{2} - 2 x_{2} x_{3} x_{5} x_{6} - 2 x_{1} x_{4} x_{5} x_{6} - x_{1} x_{3} x_{6}^{2} + x_{2} x_{4} x_{6}^{2}, 
\\ x_{2} x_{3} x_{5}^{2} + x_{1} x_{4} x_{5}^{2} + 2 x_{1} x_{3} x_{5} x_{6} - 2 x_{2} x_{4} x_{5} x_{6} - x_{2} x_{3} x_{6}^{2} - x_{1} x_{4} x_{6}^{2}, 
\\ x_{1} x_{3}^{2} x_{5} + 2 x_{2} x_{3} x_{4} x_{5} - x_{1} x_{4}^{2} x_{5} + x_{2} x_{3}^{2} x_{6} - 2 x_{1} x_{3} x_{4} x_{6} - x_{2} x_{4}^{2} x_{6}, 
\\ x_{2} x_{3}^{2} x_{5} - 2 x_{1} x_{3} x_{4} x_{5} - x_{2} x_{4}^{2} x_{5} - x_{1} x_{3}^{2} x_{6} - 2 x_{2} x_{3} x_{4} x_{6} + x_{1} x_{4}^{2} x_{6}, 
\\ x_{1}^{2} x_{3} x_{5} - x_{2}^{2} x_{3} x_{5} + 2 x_{1} x_{2} x_{4} x_{5} - 2 x_{1} x_{2} x_{3} x_{6} + x_{1}^{2} x_{4} x_{6} - x_{2}^{2} x_{4} x_{6}, 
\\x_{1} x_{2} x_{3} x_{5} - \frac{1}{2} x_{1}^{2} x_{4} x_{5} + \frac{1}{2} x_{2}^{2} x_{4} x_{5} + \frac{1}{2} x_{1}^{2} x_{3} x_{6} - \frac{1}{2} x_{2}^{2} x_{3} x_{6} + x_{1} x_{2} x_{4} x_{6}\end{align*}   
The minimal generating system of $\RR[V]^G$ is even larger, since it contains also the following $6$ degree $5$ invariants: 
\begin{align*} 
x_{2} x_{3}^{2} x_{5}^{2} - 2 x_{1} x_{3} x_{4} x_{5}^{2} - x_{2} x_{4}^{2} x_{5}^{2} + 2 x_{1} x_{3}^{2} x_{5} x_{6} 
+ 4 x_{2} x_{3} x_{4} x_{5} x_{6} - 2 x_{1} x_{4}^{2} x_{5} x_{6} 
\\ - x_{2} x_{3}^{2} x_{6}^{2} + 2 x_{1} x_{3} x_{4} x_{6}^{2} + x_{2} x_{4}^{2} x_{6}^{2}, 
\\ x_{1} x_{3}^{2} x_{5}^{2} + 2 x_{2} x_{3} x_{4} x_{5}^{2} - x_{1} x_{4}^{2} x_{5}^{2} - 2 x_{2} x_{3}^{2} x_{5} x_{6} + 4 x_{1} x_{3} x_{4} x_{5} x_{6} 
+ 2 x_{2} x_{4}^{2} x_{5} x_{6} 
\\ - x_{1} x_{3}^{2} x_{6}^{2} - 2 x_{2} x_{3} x_{4} x_{6}^{2} + x_{1} x_{4}^{2} x_{6}^{2}, 
\\ x_{1} x_{2} x_{3} x_{5}^{2} - \frac{1}{2} x_{1}^{2} x_{4} x_{5}^{2} + \frac{1}{2} x_{2}^{2} x_{4} x_{5}^{2} - x_{1}^{2} x_{3} x_{5} x_{6} + x_{2}^{2} x_{3} x_{5} x_{6} 
- 2 x_{1} x_{2} x_{4} x_{5} x_{6} 
\\ - x_{1} x_{2} x_{3} x_{6}^{2} + \frac{1}{2} x_{1}^{2} x_{4} x_{6}^{2} - \frac{1}{2} x_{2}^{2} x_{4} x_{6}^{2}, 
\\ x_{1}^{2} x_{3} x_{5}^{2} - x_{2}^{2} x_{3} x_{5}^{2} + 2 x_{1} x_{2} x_{4} x_{5}^{2} + 4 x_{1} x_{2} x_{3} x_{5} x_{6} - 2 x_{1}^{2} x_{4} x_{5} x_{6} 
+ 2 x_{2}^{2} x_{4} x_{5} x_{6} 
\\ - x_{1}^{2} x_{3} x_{6}^{2} + x_{2}^{2} x_{3} x_{6}^{2} - 2 x_{1} x_{2} x_{4} x_{6}^{2}, 
\\ x_{1} x_{2} x_{3}^{2} x_{5} + x_{1}^{2} x_{3} x_{4} x_{5} - x_{2}^{2} x_{3} x_{4} x_{5} - x_{1} x_{2} x_{4}^{2} x_{5} + \frac{1}{2} x_{1}^{2} x_{3}^{2} x_{6} 
- \frac{1}{2} x_{2}^{2} x_{3}^{2} x_{6}
\\ - 2 x_{1} x_{2} x_{3} x_{4} x_{6} - \frac{1}{2} x_{1}^{2} x_{4}^{2} x_{6} + \frac{1}{2} x_{2}^{2} x_{4}^{2} x_{6}, 
\\ x_{1}^{2} x_{3}^{2} x_{5} - x_{2}^{2} x_{3}^{2} x_{5} - 4 x_{1} x_{2} x_{3} x_{4} x_{5} - x_{1}^{2} x_{4}^{2} x_{5} + x_{2}^{2} x_{4}^{2} x_{5} 
- 2 x_{1} x_{2} x_{3}^{2} x_{6} 
\\ - 2 x_{1}^{2} x_{3} x_{4} x_{6} + 2 x_{2}^{2} x_{3} x_{4} x_{6} + 2 x_{1} x_{2} x_{4}^{2} x_{6}
\end{align*}

\section*{Acknowledgements} 

This research was partially supported by the Hungarian National Research, Development and Innovation Office,  NKFIH K 138828.

%%%%%%%%%%%%%%%%%%%%%%%%%%%%%%%%%%

\end{document}